\newcommand{\hz}[1]{{\bf\color{black} #1 }}
\newtheorem{theorem}{Theorem}[section]
\newtheorem{lemma}[theorem]{Lemma}
\renewcommand{\appendix}[1]{
\section*{Appendix: #1}
}
\renewcommand{\O}{O}
\newcommand{\bbC}{\mathbb{C}}
\newcommand{\rID}{{\it rID}}
\newcommand{\cID}{{\it cID}}
\newcommand{\MHz}[0]{\,MHz\xspace}
\newcommand{\GB}[0]{\,GB\xspace}
\newcommand*{\extendadd}{
  \mathbin{
    \mathpalette\extend@add{}
  }
}
\newcommand*{\extend@add}[2]{
  \ooalign{
    $\m@th#1\leftrightarrow$%
    \vphantom{$\m@th#1\updownarrow$}
    \cr
    \hfil$\m@th#1\updownarrow$\hfil
  }
}
\begin{document}

\title{A Hierarchical Butterfly LU Preconditioner for Two-Dimensional Electromagnetic Scattering Problems Involving Open Surfaces}
\author{Yang Liu \\ Lawrence Berkeley National Laboratory, USA\\ \href{mailto:liuyangzhuan@lbl.gov}{liuyangzhuan@lbl.gov}
     \and Haizhao Yang \\ Department of Mathematics\\ Purdue University\footnote{Current institute.}, USA\\National University of Singapore\footnote{Part of the work was done in Singapore.}, Singapore\\ \href{mailto:haizhao@purdue.edu}{haizhao@purdue.edu} }

\maketitle

\begin{abstract}
This paper introduces a hierarchical interpolative decomposition butterfly-LU factorization (H-IDBF-LU) preconditioner for solving two-dimensional electric-field integral equations (EFIEs) in electromagnetic scattering problems of perfect electrically conducting objects with open surfaces. H-IDBF-LU leverages the interpolative decomposition butterfly factorization (IDBF) to compress dense blocks of the discretized EFIE operator to expedite its application; this compressed operator also serves as an approximate LU factorization of the EFIE operator leading to an efficient preconditioner in iterative solvers. Both the memory requirement and computational cost of the H-IDBF-LU solver scale as $O(N\log^2 N)$ in one iteration; the total number of iterations required for a reasonably good accuracy scales as $O(1)$ to $O(\log^2N)$ in all of our numerical tests. The efficacy and accuracy of the proposed preconditioned iterative solver are demonstrated via its application to a broad range of scatterers involving up to $100$ million unknowns. 
\end{abstract}

{\bf Keywords.} Preconditioned iterative solver, interpolative decomposition butterfly factorization, LU factorization, electric-field integral equation (EFIE), scattering.   

{\bf AMS subject classifications: 44A55, 65R10 and 65T50.}

\section{Introduction}
\label{sec:intro}
Iterative and direct surface integral equation (IE)
techniques are popular tools for the scattering analysis involving electrically large perfect electrically conducting (PEC)
objects. In iterative techniques, fast matvec algorithms including multilevel fast multipole algorithms
(MLFMA) \cite{ROKHLINFMM1,ROKHLINFMM2,CHENGFMM,FMM3,FMM4,FMM5,FMM6}, directional compression algorithms \cite{YingFFT,YingDirect,YingFMM,MESSNER20121175}, special function transforms \cite{AVERBUCH200019,BRADIE199394,768807,doi:10.1137/0913004,DEMANET2007368}, and butterfly factorizations (BFs) \cite{Butterfly1,Butterfly2,BF,IDBF,MBF,MVBF} can be used to rapidly apply discretized IE operators to trial solution
vectors. These techniques typically require $O ( {N_{i}^{\alpha}}N \log^\beta N )$ (${\alpha,}~\beta = 1$ or $2$) CPU and memory resources, where $N$ is the
dimension of the discretized IE operator and ${N_i}$ is the number of iterations required for
convergence. The widespread adoption and success of fast iterative methods for solving
real-world electromagnetic scattering problems can be attributed wholesale to their low
computational costs when ${N_i}$ is small. However, in many applications when scatterers support resonances or are discretized using multi-scale/dense meshes, the corresponding linear system becomes ill-conditioned {especially for first-kind integral equation formulations} and ${N_i}$ can be prohibitively large. This motivates a significant amount of research devoted to the design of efficient semi-analytic or analytic preconditioners for integral equations for both open and closed surfaces \cite{pEFIE2D1,pEFIE2D2,BrunoCalderon1,BrunoCalderon1,Lexing2015}. 
 
Direct solvers do not suffer (to the same degree) from the aforementioned
drawbacks, since they construct a compressed representation of the inverse of the discretized
IE operator directly. Most state-of-the-art direct methods apply low rank (LR) approximations to compress judiciously selected off-diagonal blocks
of the discretized IE operator and its inverse \cite{doi:10.1029/2012RS004988,RS2,5659467,4589136,5175469,Bebendorf2005,doi:10.1137/16M1095949}. LR
compression schemes are highly efficient and lead to nearly linear scaling direct solvers for electrically
small \cite{RS2,CORONA2015284}, or specially-shaped structures \cite{MARTINSSON2007288,533187,5297271,6824171}.
However, for electrically large and arbitrarily shaped scatterers, the numerical rank of blocks of the discretized IE operators and its inverse is no longer small. As a result, there is no theoretical guarantee of low computational costs for LR schemes in this high-frequency regime;
experimentally their CPU and memory requirements have been found to scale as
$O ( N^\alpha \log^\beta N )$ ($\alpha \in[ 2.0,3.0]$, $\beta\geq 1$) and $O(N^\alpha \log N )$ ($\alpha \in[1.3,2.0]$), respectively. More recently, butterfly factorizations have been applied to construct reduced-complexity direct solvers in the high-frequency regime \cite{HSSBF,LUBF,LUBF2}. These solvers construct butterfly factorizations with constant ranks for blocks (that are LR less compressible) in the discretized IE operator and its inverse and rely on fast randomized algorithms to construct the inverse in $O(N^{1.5}\log N)$ operations.   

The lack of quasi-linear complexity direct solvers in the high-frequency regime motivates this work to develop a hierarchical butterfly-compressed algebraic preconditioner for solving EFIEs with $O ( N \log^2 N )$ CPU and memory complexity per iteration and up to $O(\log^2N)$ total iterations, for analyzing scattering from electrically large
two-dimensional PEC objects with open surfaces. First, the interpolative decomposition butterfly factorization (IDBF) algorithm \cite{IDBF} is used to compress off-diagonal blocks of the discretized IE operator, leading to $O(N\log^2N)$ construction and application algorithms with the hierarchical IDBF (H-IDBF) of the IE operator. Second, we construct an approximate butterfly-compressed LU factorization of H-IDBF inspired by the work in \cite{LUBF}. In contrast to \cite{LUBF} that computes the LU factorization via randomized butterfly algebra, here the lower and upper triangular parts of the H-IDBF can directly serve as its approximate LU factorization. This is justified by the observation that the discretized IE operator and its LU factors exhibit similar oscillation patterns after a proper row-wise/column-wise ordering. This approximate LU factorization permits construction of a quasi-linear complexity preconditioner for EFIEs when applied to a wide range of scatterers. Compared to the analytic preconditioners in \cite{BrunoCalderon1,Lexing2015}, the proposed H-IDBF-LU preconditioner is capable of analyzing scattering from electrically large objects involving up to $100$ million unknowns.

\section{Interpolative Decomposition Butterfly Factorization (IDBF)}
\label{sec:IDBF}

Since the IDBF will be applied repeatedly in this paper, we briefly sumarize the  $O(N\log N)$ IDBF algorithm proposed in \cite{IDBF} for a complementary low-rank matrix $K\in\mathbb{C}^{M\times N}$ with $M\approx N$. 

Let $X$ and $\Omega$ be the row and column index sets of $K$. Two trees $T_X$ and $T_\Omega$ of the same depth $L=O(\log N)$,
associated with $X$ and $\Omega$ respectively,
are constructed by dyadic partitioning with approximately equal node sizes with leaf node sizes no larger than $n_0$. Denote the root level of the tree as level $0$ and
the leaf one as level $L$. Such a matrix $K$ of size $M\times N$
is said to satisfy the {\bf complementary low-rank property} if for
any level $\ell$, any node $A$ in $T_X$ at level $\ell$, and any node
$B$ in $T_\Omega$ at level $L-\ell$, the submatrix $K_{A,B}$, obtained
by restricting $K$ to the rows indexed by the points in $A$ and the
columns indexed by the points in $B$, is numerically low-rank.
See Figure \ref{fig:submatrices} for an illustration of low-rank submatrices
in a complementary low-rank matrix of size $16n_0\times 16n_0$.

\begin{figure}[ht!]
  \begin{center}
    \begin{tabular}{ccccc}
      \includegraphics[height=1.1in]{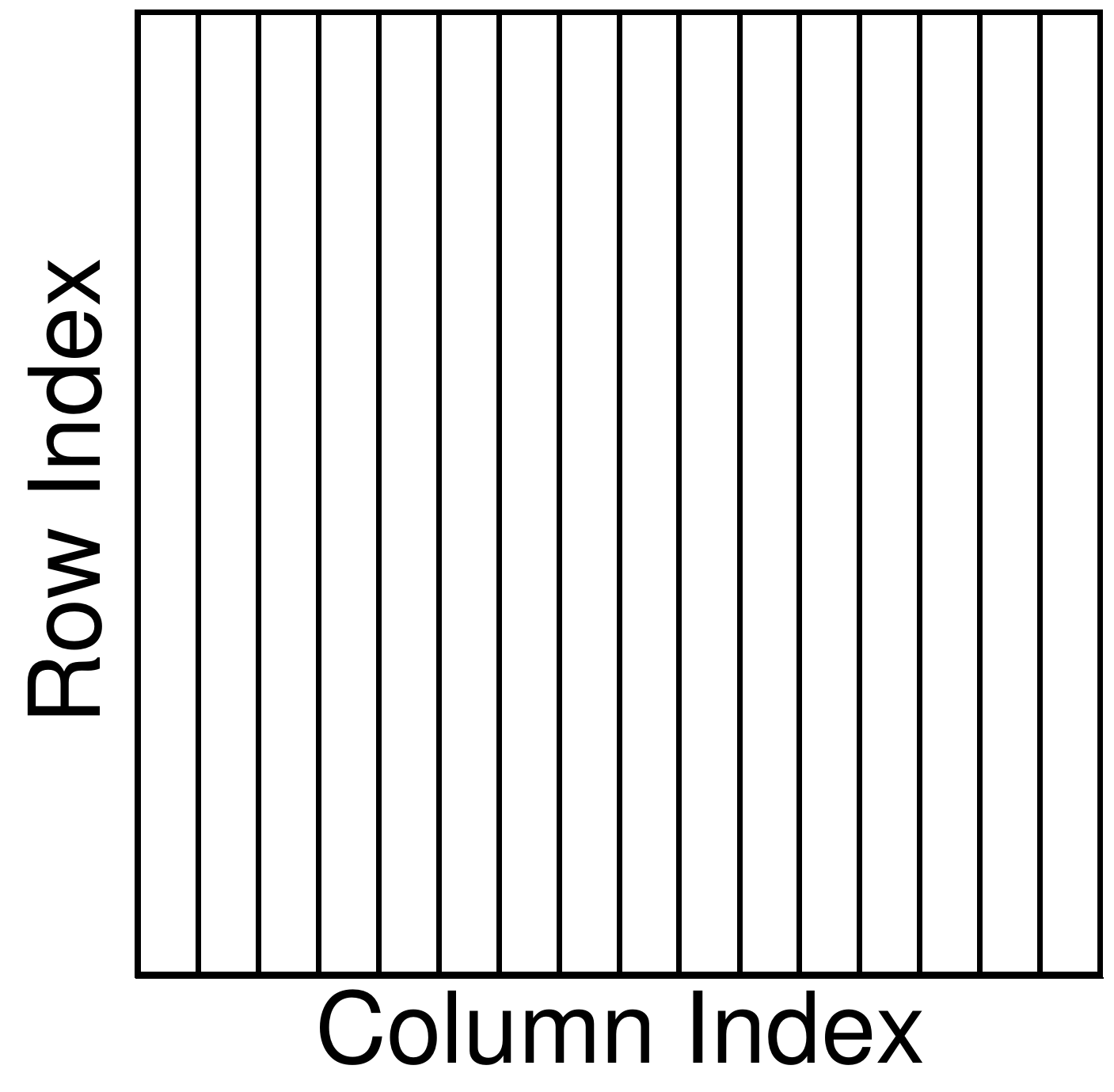}&
      \includegraphics[height=1.1in]{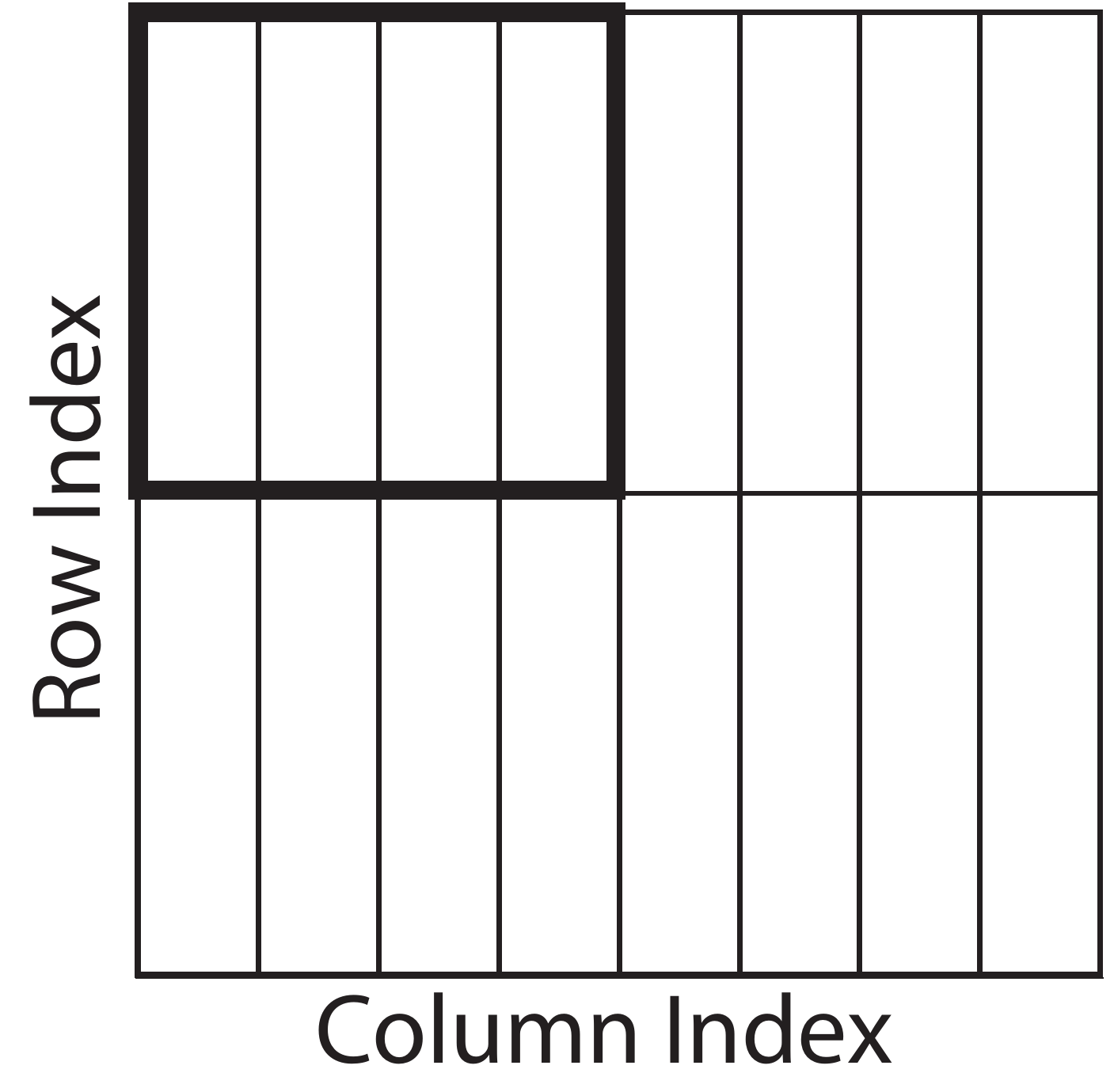}&
      \includegraphics[height=1.1in]{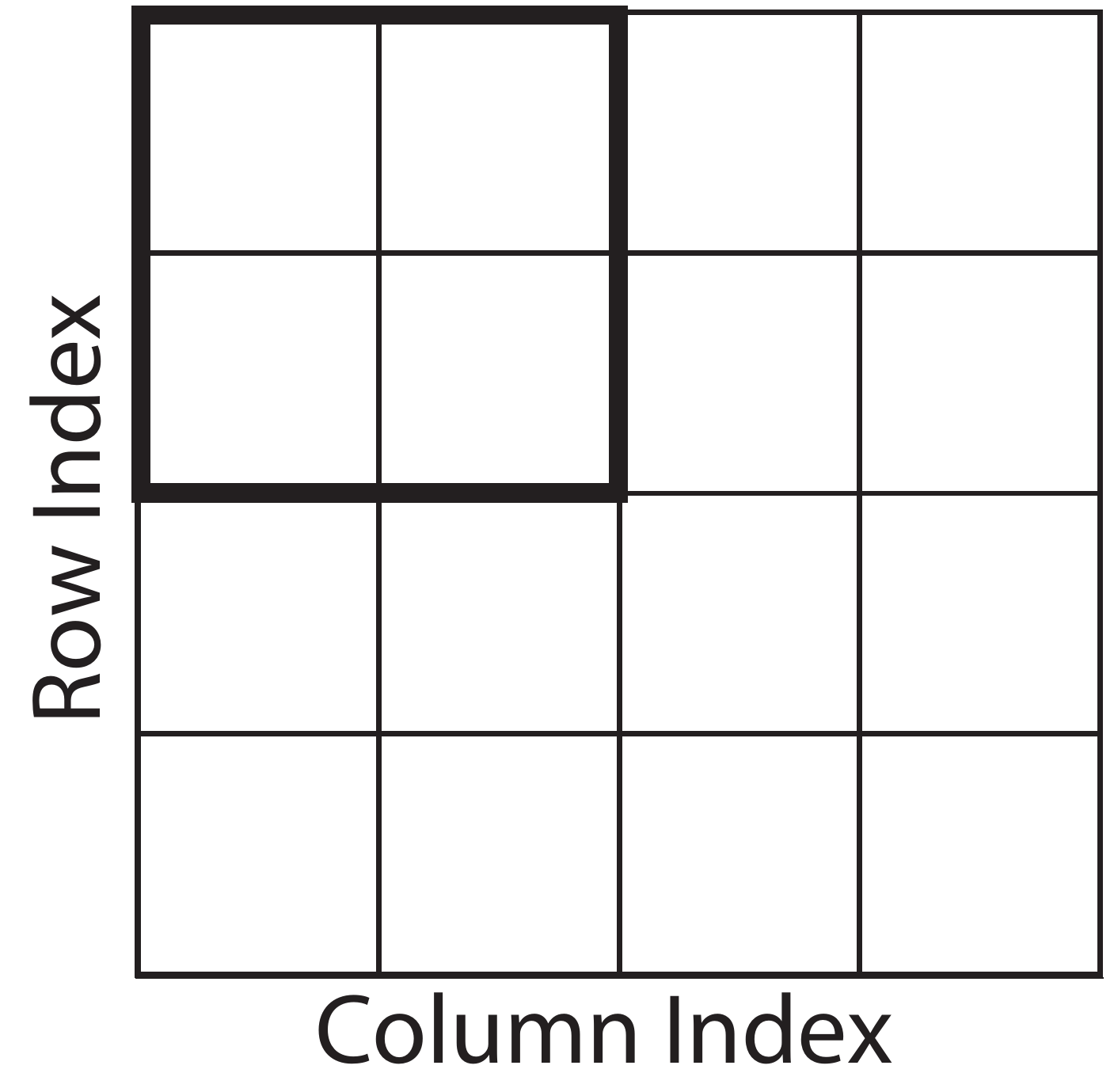}&
      \includegraphics[height=1.1in]{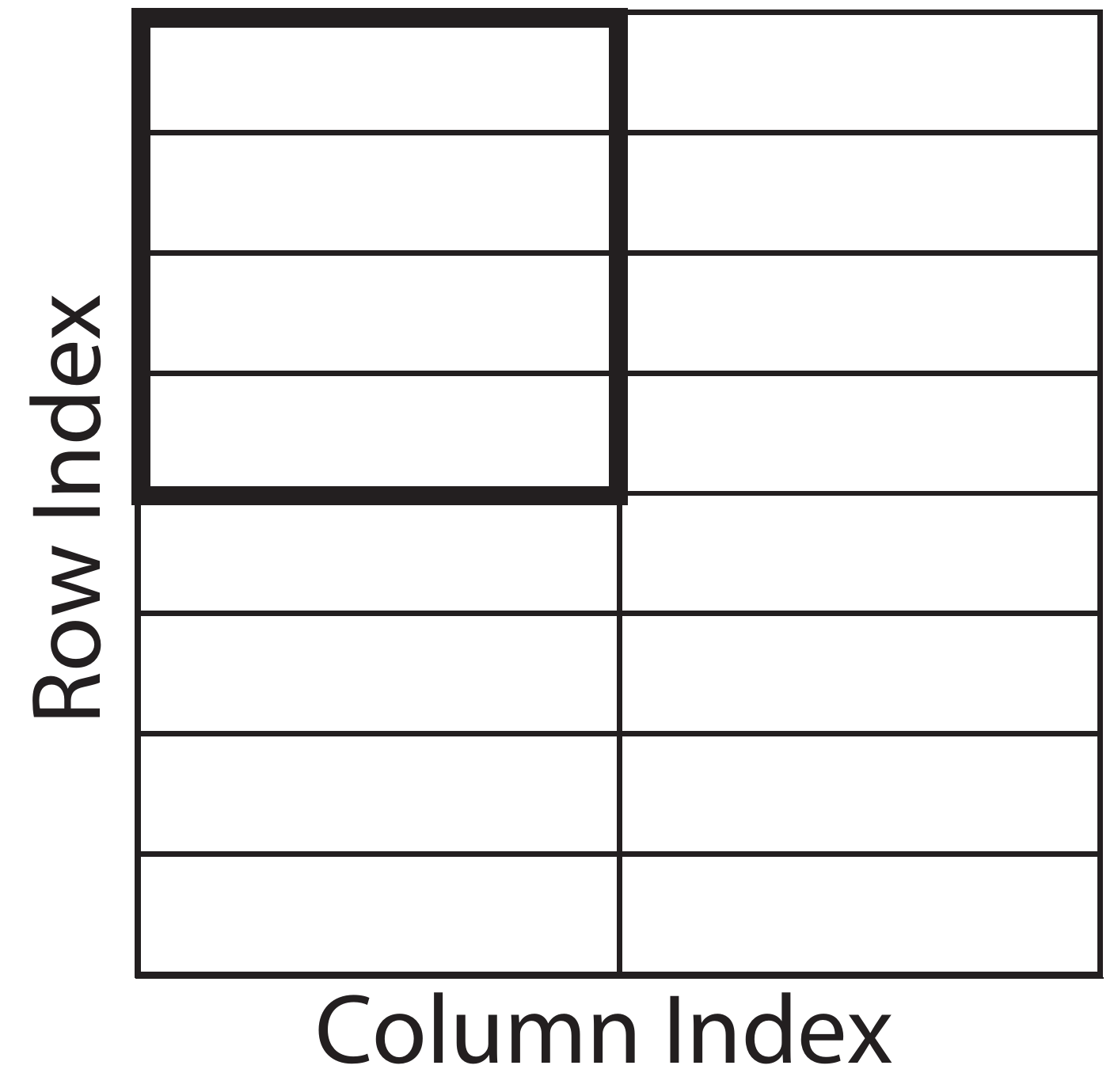}&
      \includegraphics[height=1.1in]{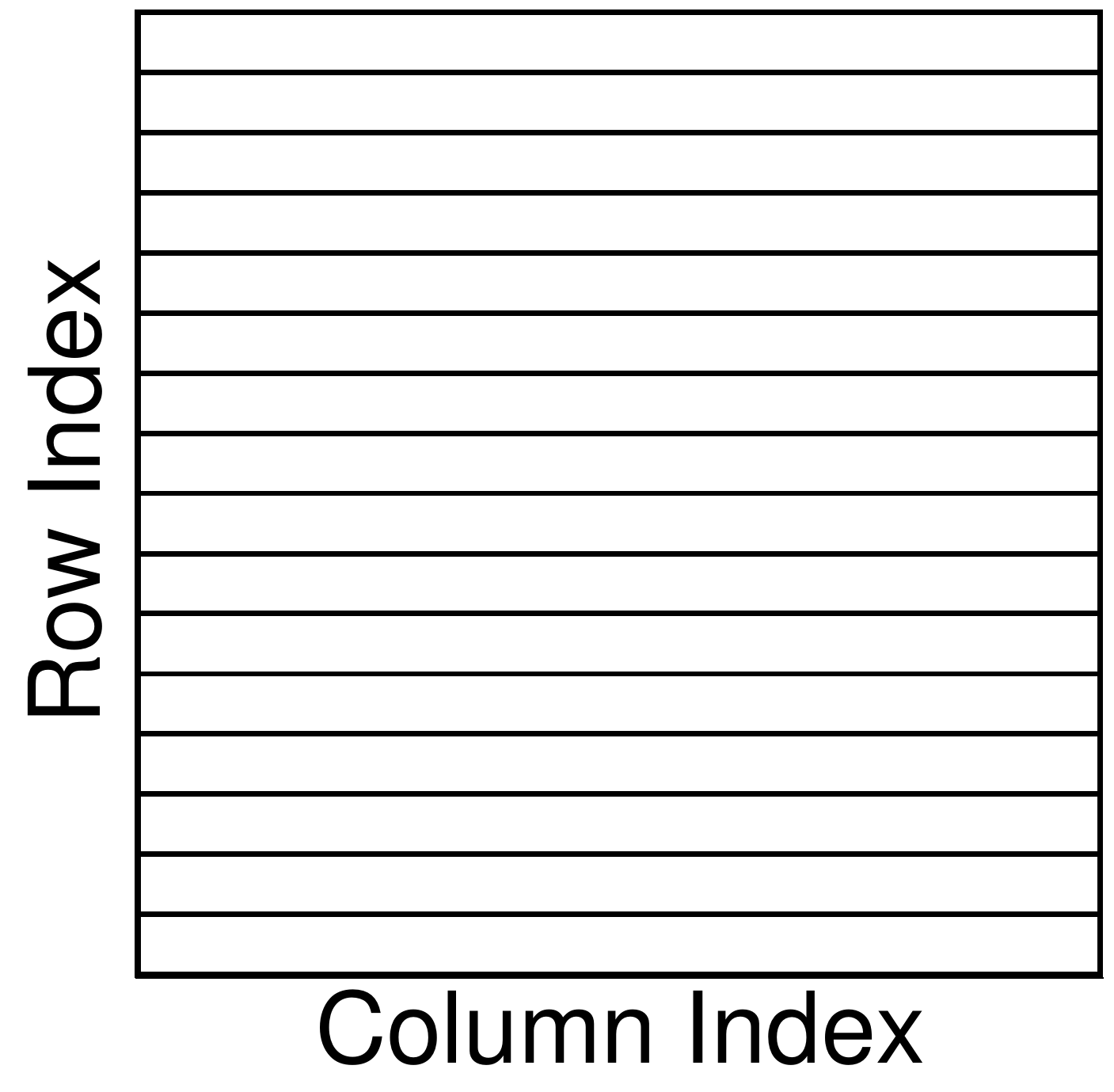}
    \end{tabular}
  \end{center}
  \caption{Hierarchical decomposition of the row and column indices
    of a complementary low-rank matrix of size
     $16n_0\times 16n_0$.  The trees $T_{X}$ ($T_{\Omega}$)
    has a root containing $16n_0$ column (row) indices and leaves
    containing $n_0$ row (column) indices.  The rectangles above
    indicate the low-rank submatrices that will be factorized in IDBF.}
\label{fig:submatrices}
\end{figure}

Given $K$, or equivalently an $O(1)$ algorithm to evaluate an arbitrary entry of $K$, IDBF aims at constructing a data-sparse representation of $K$ using the ID of low-rank submatrices in the complementary low-rank structure (see Figure \ref{fig:submatrices}) in the following form: 
\begin{equation}
\label{eqn:BFM}
K \approx U^{L}U^{L-1}\cdots U^{h} S^h V^{h}\cdots V^{L-1}V^{L},
\end{equation}
where the depth $L=\O(\log N)$ is assumed to be even without loss of generality,
$h=L/2$ is a middle level index, and all factors are sparse matrices with
$\O(N)$ nonzero entries. Storing and applying IDBF requires only $O(N\log N)$ memory and time.

{The construction of the butterfly factorization was usually expensive though the application is cheap. The IDBF in \cite{IDBF} applies a linear scaling interpolation decomposition technique with carefully selected matrix skeleton of $K$ to achieve $O(N\log(N))$ factorization time, making the butterfly factorization more attractive in large-scale applications. Hence, the H-IDBF-LU to be introduced in the next section also admits nearly linear scaling factorization time.}

\section{H-IDBF-LU}

\subsection{Overview of the H-IDBF-LU preconditioner}
\label{sub:mov}

\begin{figure}[ht!]
  \begin{center}
    \begin{tabular}{c}
      \includegraphics[height=2in]{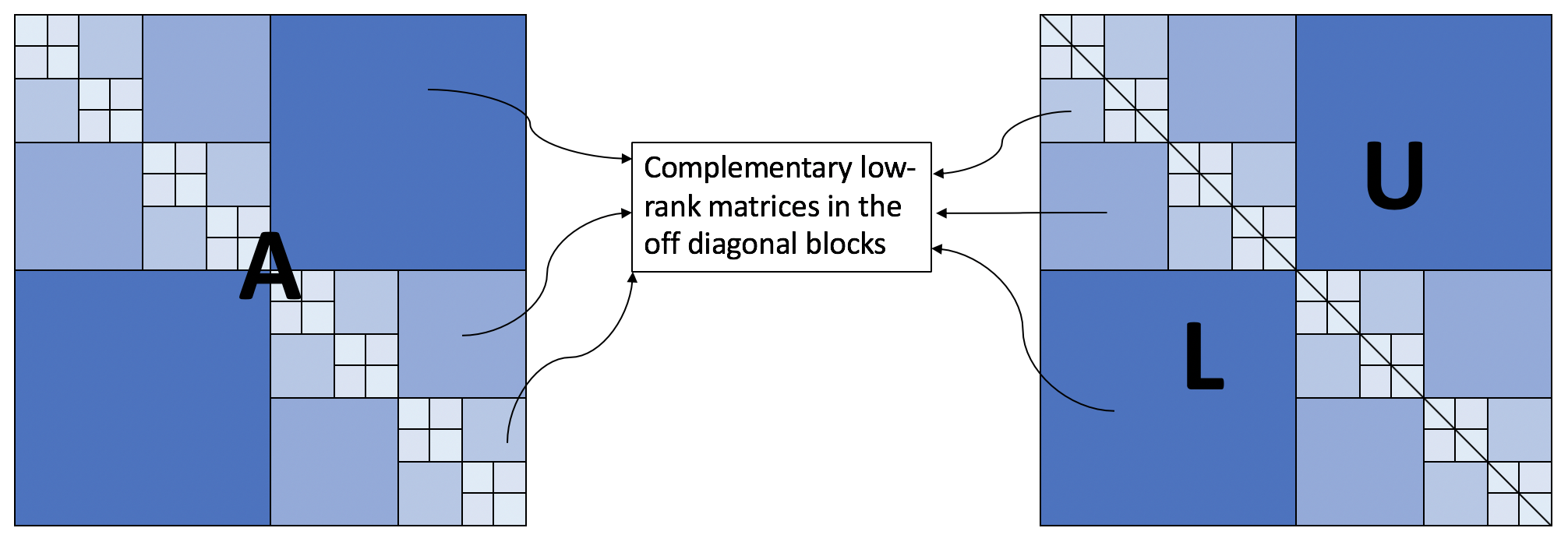}
    \end{tabular}
  \end{center}
\caption{Left: hierarchical decomposition of the impedance matrix $A$ with off-diagonal blocks as complementary low-rank matrices. Right: $A$ is separated into a lower triangular part (denoted as $L$) with diagonal entries being $1$, and a upper triangular part (denoted as $U$) with diagonal entries equal to those of $A$. }
\label{fig:H-IDBF-LU}
\end{figure}

Here we briefly describe the motivation and workflow of H-IDBF-LU in the context of EFIE for 2D $\mathrm{TM_z}$ scattering. Let $S$ denote a PEC cylindrical shell residing in free space. A current $J_z(\rho)$ is induced on $S$ by an incident electric field $E^{inc}_z(\rho)$. Enforcing a vanishing total electric field on $S$ results in the following EFIE:
\[
E^{inc}_z(\rho) = \frac{{\kappa}\eta_0}{4}\int_S J_z(\rho')H^{(2)}_0({\kappa}|\rho-\rho'|)ds, \quad \forall\rho\in S.
\]
Here, ${\kappa}=2\pi/\lambda_0$ is the wavenumber, $\lambda_0$ represents the free-space wavelength, $\eta_0$ is the intrinsic impedance of free space, and $H^{(2)}_0$ denotes the zeroth-order Hankel function of the second kind. Upon discretization of the current with pulse basis functions and point matching the above EFIE, the following system of linear equations is attained \cite{Butterfly1}
\begin{equation}\label{eqn:Axb}
Ax=b,
\end{equation}
where $A$ is the impedance matrix with
\begin{align*}
 A_{ij} &=
  \begin{cases}
   {\frac{{\kappa}\eta_0w_j}{4}} H_0^{(2)}({\kappa}|\rho_i-\rho_j|),        & \text{if } i\neq j, \\
   {\frac{{\kappa}\eta_0w_i}{4}}\left[ 1-\mathrm{i} \frac{2}{\pi}\ln \left( \frac{\gamma {\kappa}{w_i}}{4e} \right)  \right],       & \text{otherwise},
  \end{cases}
\end{align*}
$b_i=E^{inc}_z(\rho_i)$ and $x_i=J_z(\rho_i)$ where $e\approx 2.718$, $\gamma\approx 1.781$, {$w_i$} and $\rho_i$ are the length and center of the line segment associated with $i$-th pulse basis function. In the proposed solver, the linear system in \eqref{eqn:Axb} is rescaled to $aAx=ab$, where we choose $a:=\frac{1}{\max_i\{|A(i,1)|\}}$ or $a:=\left\lVert A\right\rVert_2$. Here $\left\lVert A\right\rVert_2$ can be rapidly computed via randomized SVD algorithms. Rescaling is a key step for the proposed LU preconditioner to avoid arithmetic overflow in solution vectors as the iteration count increases. {Note that for second-kind integral equations the rescaling may not be necessary.} With the abuse of notations, we will still use $Ax=b$ to denote the linear system after rescaling.

It was validated in \cite{Butterfly1,HSSBF} that {$A$} admits a hierarchical complementary low-rank property, i.e., off-diagonal blocks (only well-separated ones if necessary) are complementary low-rank matrices (See Figure \ref{fig:H-IDBF-LU} (left) for an example). \cite{Butterfly1} originally proposed a hierarchical compression technique with $O(N\log^2 N)$ operations to compress the impedance matrix $A$; the IDBF in \cite{IDBF} proposed a more stable algorithm with the same accuracy to compress $A$. After compression, it requires $O(N\log^2N)$ operations to apply the impedance matrix and makes it possible to design efficient linear solvers for \eqref{eqn:Axb}. The algorithm in \cite{IDBF} for compression of the impedance matrix is referred to as hierarchical IDBF (H-IDBF) in this paper.


Since the H-IDBF of the impedance matrix $A$ leads to an $O(N\log^2N)$ fast matvec algorithm in a purely algebraic fashion, an immediate question is how to construct a direct solver or an efficient preconditioner for the linear system in \eqref{eqn:Axb}. It was observed (without proof) in \cite{LUBF} that the $L$ and $U$ factors in the LU factorization of $A$ are also hierarchically complementary low-rank matrices. 
Hence, \cite{LUBF} proposed an $O(N^{1.5}\log N)$ factorization algorithm based on randomized butterfly algebra to construct hierarchical compressions of the $L$ and $U$ factors, followed by an $O(N\log^2N)$ block triangular solution algorithm for rapidly application of the matrix inverse to a vector. The lack of a quasi-linear LU factorization algorithm as a direct solver in \cite{LUBF} motivates this work to construct an approximate LU factorization of $A$ in $O(N\log^2N)$ operations as an efficient preconditioner. 

The main observation of the proposed approximate LU factorization of $A$ is that, the lower and upper triangular parts of $A$ have similarly oscillation patterns as the $L$ and $U$ factors of its LU factorization, when the scatterer consists of open surfaces that do not support high-Q resonance. It is also critical that the rows and columns of $A$ are properly ordered such that $A$ exhibits only $O(1)$ discontinuities {with respective to $N$} in each row and column. Figure \ref{fig:ex1} and \ref{fig:ex2} show two examples of such scatterers; one is a spiral scatterer and the other one consists of two parallel strips. To be precise, define triangular matrices $\tilde{L}$ and $\tilde{U}$ as follows:
\begin{align}\label{eqn:Lt}
 \tilde{L}(i,j) &=
  \begin{cases}
   0,        & \text{if } i<j, \\
   1,     & \text{if } i=j,\\
   A(i,j),       & \text{otherwise},
  \end{cases}
\end{align}
and
\begin{align}\label{eqn:Ut}
 \tilde{U}(i,j) &=
  \begin{cases}
   0,        & \text{if } i>j, \\
   A(i,j),       & \text{otherwise}.
  \end{cases}
\end{align}
Let $L$ and $U$ be the lower and upper triangular matrices of the LU factorization of $A$, we observe $L\approx \tilde{L}$, $U\approx \tilde{U}$, and $\tilde{L}^{-1}A\tilde{U}^{-1}\approx I$ numerically, where $I$ is an identity matrix. See Figure \ref{fig:ex1} and \ref{fig:ex2} for the visualization of the approximation $\tilde{L}^{-1}A\tilde{U}^{-1}\approx I$ and the eigenvalue distribution of $\tilde{L}^{-1}A\tilde{U}^{-1}\approx I$. Take a spiral scatterer for example, the real (and imaginary) parts of $A$ (Figure \ref{fig:ex1}(d)) and its LU factors (Figure \ref{fig:ex1}(e)) exhibit very similar oscillation patterns. Before preconditioning, there are eigenvalues of $A$ clustered at the origin (Figure \ref{fig:ex1}(b)), while the preconditioned system $\tilde{L}^{-1}A\tilde{U}^{-1}$ has no eigenvalues near the origin (Figure \ref{fig:ex1}(c)). In fact, $\tilde{L}^{-1}A\tilde{U}^{-1}$ exhibits vanishingly small off-diagonal values (Figure \ref{fig:ex1}(f)). The H-IDBFs of $\tilde{L}$ and $\tilde{U}$ are referred to as the H-IDBF-LU of the impedance matrix $A$ in this paper.

Note that $\tilde{L}$ and $\tilde{U}$ are the lower and upper triangular parts of $A$, whose H-IDBFs are readily available from the H-IDBF of $A$ without any extra cost. Following the block triangular solve algorithm for H-matrices \cite{lecture}, it only requires $O(N\log^2N)$ operations to apply $\tilde{L}^{-1}$ and $\tilde{U}^{-1}$ to a vector. Hence, the approximate LU factorization results in an $O(N\log^2N)$ preconditioner for the linear system in \eqref{eqn:Axb}. To be more specific, instead of directly solving \eqref{eqn:Axb} using traditional iterative solvers, we solve a preconditioned linear system
\begin{equation}\label{eqn:Axb2}
\tilde{L}^{-1}A\tilde{U}^{-1}y=\tilde{L}^{-1}b,
\end{equation}
and
\begin{equation}\label{eqn:Axb3}
\tilde{U}x=y.
\end{equation}
Since the eigenvalues of $\tilde{L}^{-1}A\tilde{U}^{-1}$ are well separated from $0$ and gathered around $1$ (see Figure \ref{fig:ex1} and \ref{fig:ex2} for two examples), the condition number of $\tilde{L}^{-1}A\tilde{U}^{-1}$ is well controlled and the number of iterations for solving \eqref{eqn:Axb2} is small. In all of our examples, the iteration number is $O(1)$, $O(\log N)$, or $O(\log^2N)$ for scatterers consisting of open surfaces. As the construction, application and triangular solve algorithms all require $O(N\log^2N)$ operations, the overall preconditioner is also quasi-linear.

\subsection{\hz{Algorithm description}}

The remaining task in this section is to introduce the fast construction, application and solution phases of the H-IDBF-LU solver. 

\subsubsection{Construction and application of H-IDBF}
\label{sub:conh}

As we have seen in the previous subsection, the construction of the proposed preconditioner, i.e., the H-IDBFs of $\tilde{L}$ and $\tilde{U}$, is an immediate result of the H-IDBF of $A$. Hence, it is sufficient to introduce the construction and application of the H-IDBF of $A$. In fact, these resemble standard techniques for hierarchical matrices \cite{Grasedyck2003,Hmatrix,HSSmatrix}, and can be summarized in Algorithm \ref{alg:1} and \ref{alg:2}, respectively. For illustration purpose, we assume $N$ is a power of $2$ and omit the parameters of IDBF in all algorithms. Furthermore, we assume all off-diagonal blocks of $A$ are butterfly compressible (i.e., weak admissibility). However, the proposed solver trivially extends to arbitrary problem sizes and strong admissibility. Let $n_0$ denote the predefined leaf-level size parameter for all IDBFs, algorithm \ref{alg:1} constructs a IDBF of $L=\log_2 N - \log_2 n_0 -v$ levels for each off-diagonal block of dimension $N/2^v$. Once constructed, algorithm \ref{alg:2} can apply the compressed $A$ to arbitrary vectors by accumulating the matvec results of each submatrix.  

\vspace{0.25cm}
\begin{algorithm}[H]
 \KwData{A hierarchical complementary low-rank matrix $A\in\mathbb{C}^{N\times N}$ or an algorithm to evaluate an arbitrary entry of $A$ in $O(1)$ operations.}
 \KwResult{The H-IDBF of $A$, denoted as $F$, stored in a data structure consisting of four parts: $F_{11}$, $F_{12}$, $F_{21}$, and $F_{22}$. }
  \eIf{$N\leq n_0$ }{
  \For{$i=1,2$ and $j=1,2$}{
   Let $F_{ij}=A((1:N/2)+(i-1)N/2,(1:N/2)+(j-1)N/2)$\;
  }
   }{
   Recursively apply Algorithm \ref{alg:1} with $A(1:N/2,1:N/2)$ as the input to obtain $F_{11}$ as the corresponding output\;
   Recursively apply Algorithm \ref{alg:1} with $A(N/2+1:N,N/2+1:N)$ as the input to obtain $F_{22}$ as the corresponding output\;
   Construct the IDBF of $A(1:N/2,N/2+1:N)$ and store it in $F_{12}$\;
   Construct the IDBF of $A(N/2+1:N,1:N/2)$ and store it in $F_{21}$\;
  }
 \caption{An $O(N\log^2N)$ recursive algorithm for constructing the H-IDBF of a hierarchical complementary low-rank matrix $A$. }
 \label{alg:1}
\end{algorithm}

\vspace{0.25cm}
\begin{algorithm}[H]
 \KwData{A H-IDBF of $A\in\mathbb{C}^{N\times N}$ and vectors $v,u\in\mathbb{C}^N$.}
 \KwResult{$u\mathrel{+}=Av$.}
 \eIf{$N\leq n_0$ }{
  \For{$i=1,2$}{
   $u((1:N/2)+(i-1)N/2)\mathrel{+}=F_{i1}u(1:N/2)+F_{i2}u(N/2+1:N)$\;
  }
   }{
   Recursively apply Algorithm \ref{alg:2} to compute $u(1:N/2)\mathrel{+}=F_{11}v(1:N/2)$\;
   Recursively apply Algorithm \ref{alg:2} to compute $u(N/2+1:N)\mathrel{+}=F_{22}v(N/2+1:N)$\;
   Apply the IDBF stored as $F_{21}$ to get $u(N/2+1:N)\mathrel{+}=F_{21}v(1:N/2)$\;
   Apply the IDBF stored as $F_{12}$ to get $u(1:N/2)\mathrel{+}=F_{12}v(N/2+1:N)$\;
  }
 \caption{An $O(N\log^2N)$ recursive algorithm for applying the H-IDBF of a hierarchical complementary low-rank matrix $A$ to a vector $v$.}
 \label{alg:2}
\end{algorithm}

\subsubsection{Solution of H-IDBF-LU}
\label{sub:prec}

\begin{figure}[ht!]
  \begin{center}
    \begin{tabular}{ccc}
      \includegraphics[height=1.6in]{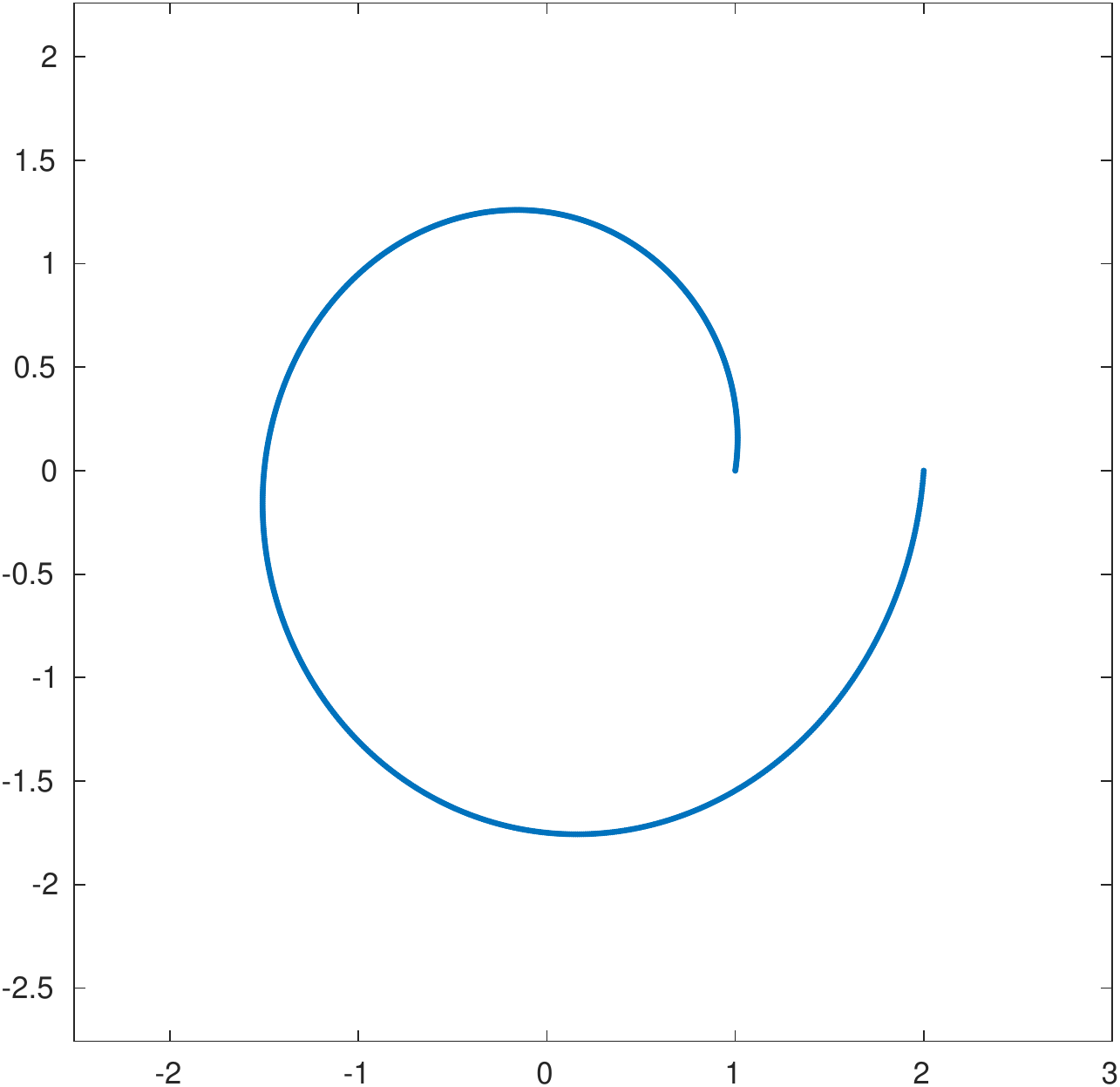} &  \includegraphics[height=1.6in]{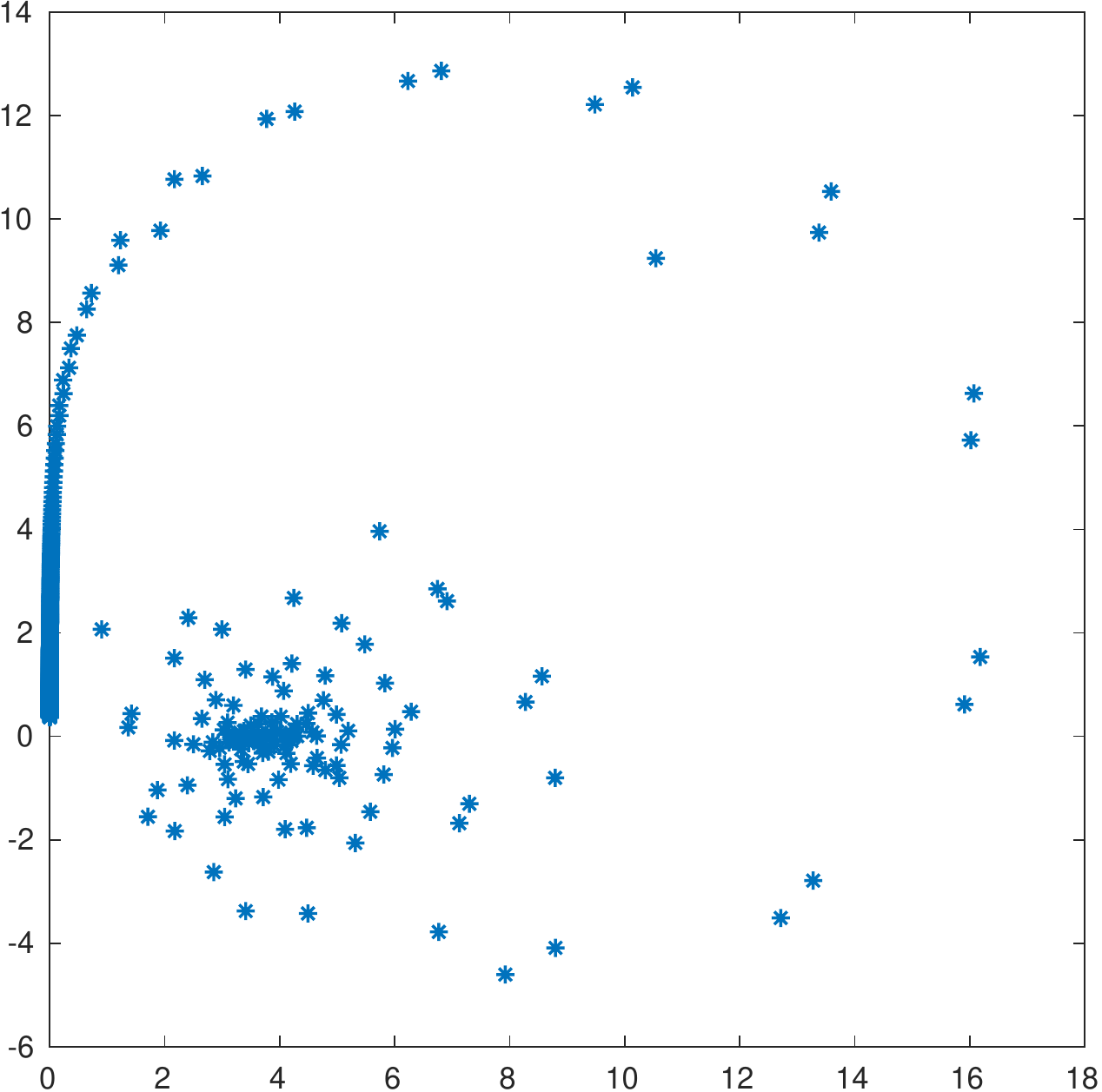} &
       \includegraphics[height=1.6in]{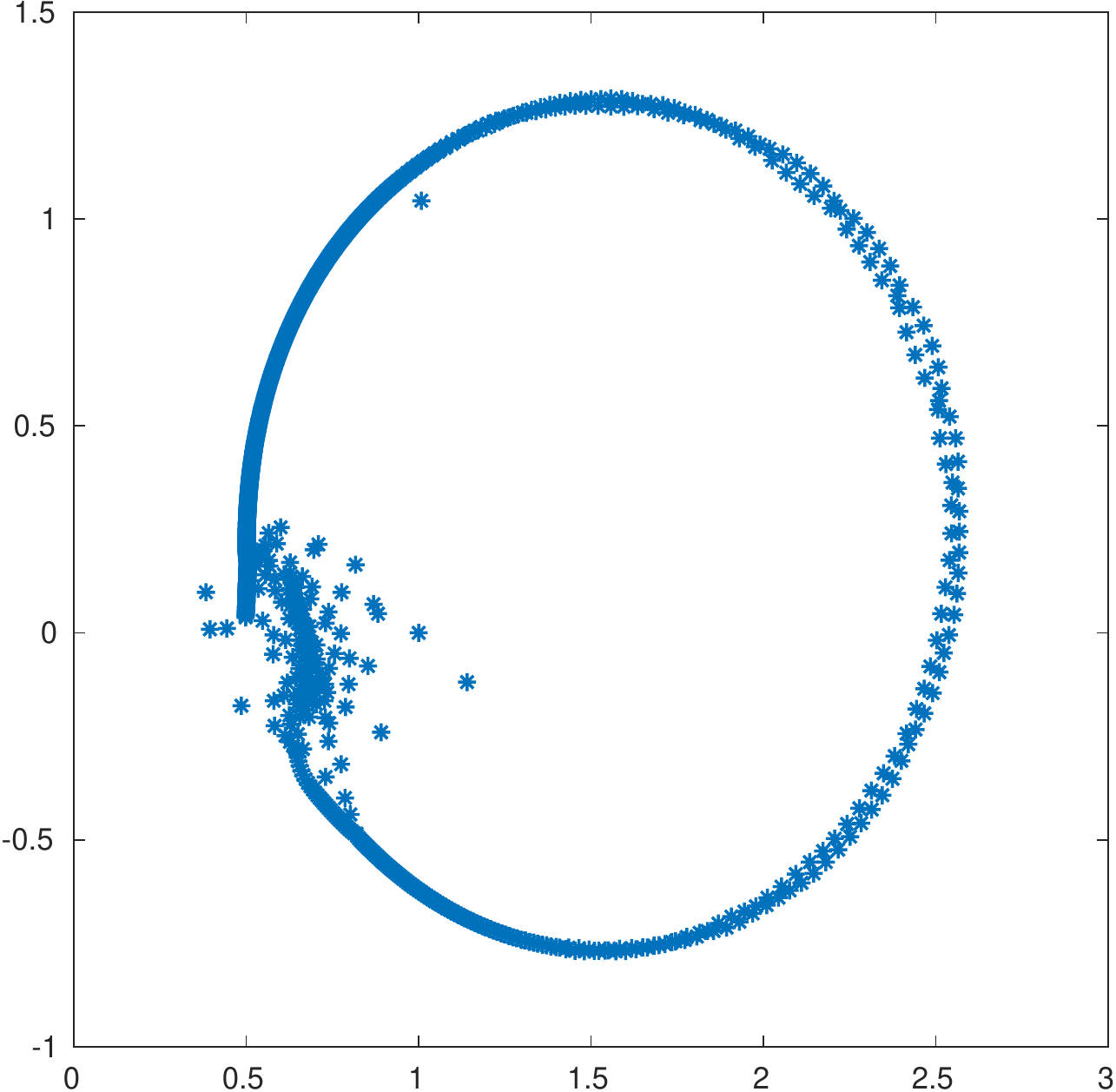} \\
       (a) & (b) & (c) \\
      \includegraphics[height=1.6in]{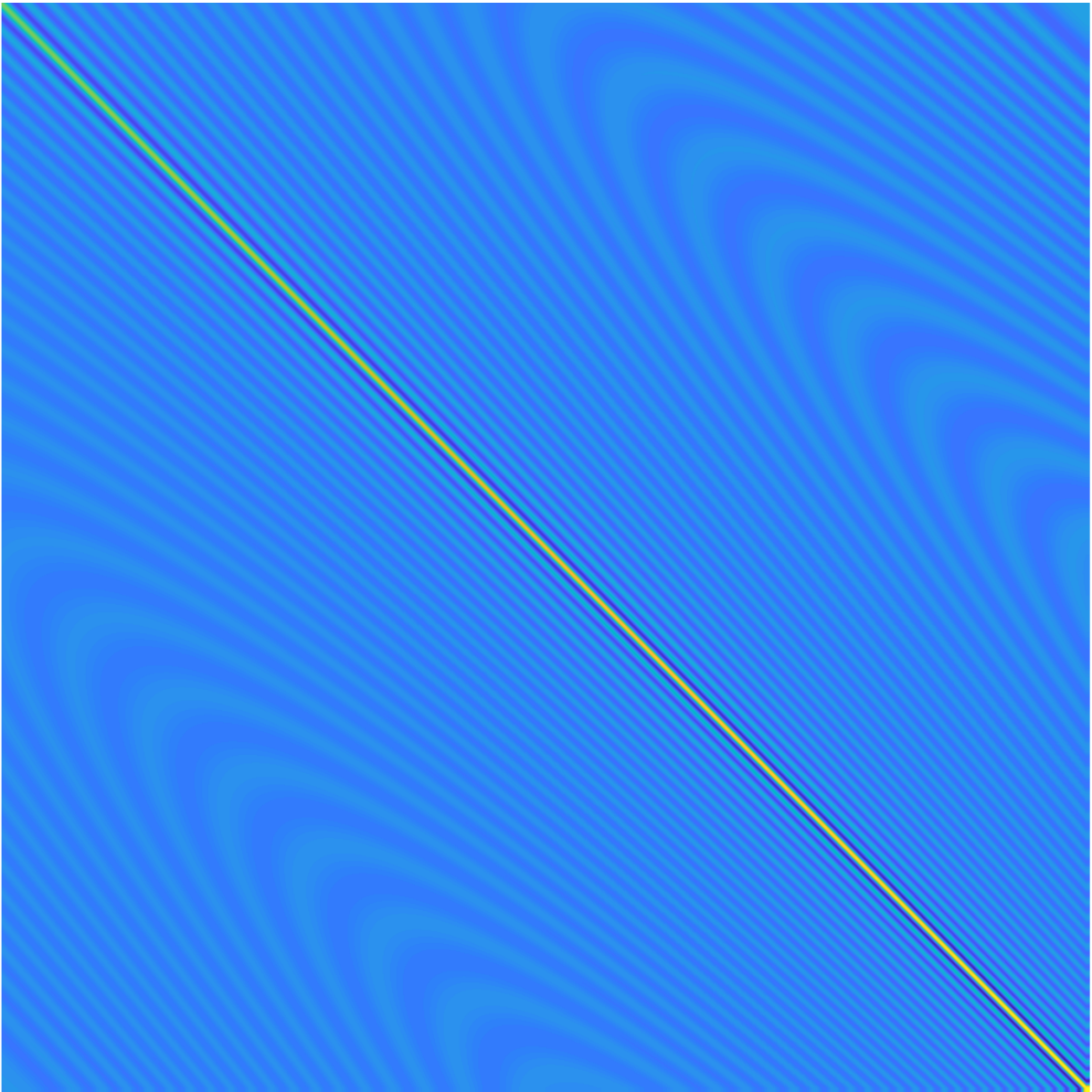} &  \includegraphics[height=1.6in]{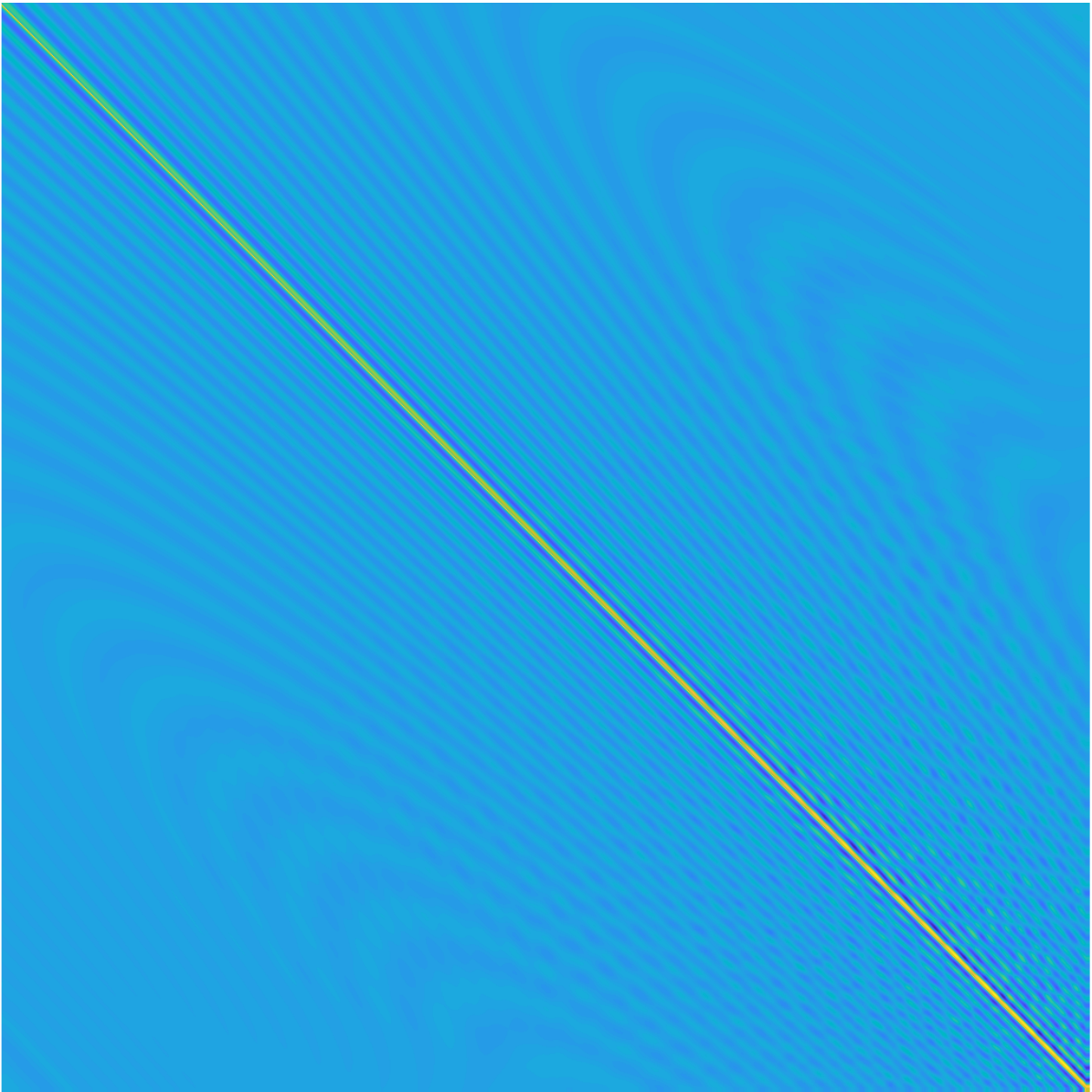} &
       \includegraphics[height=1.6in]{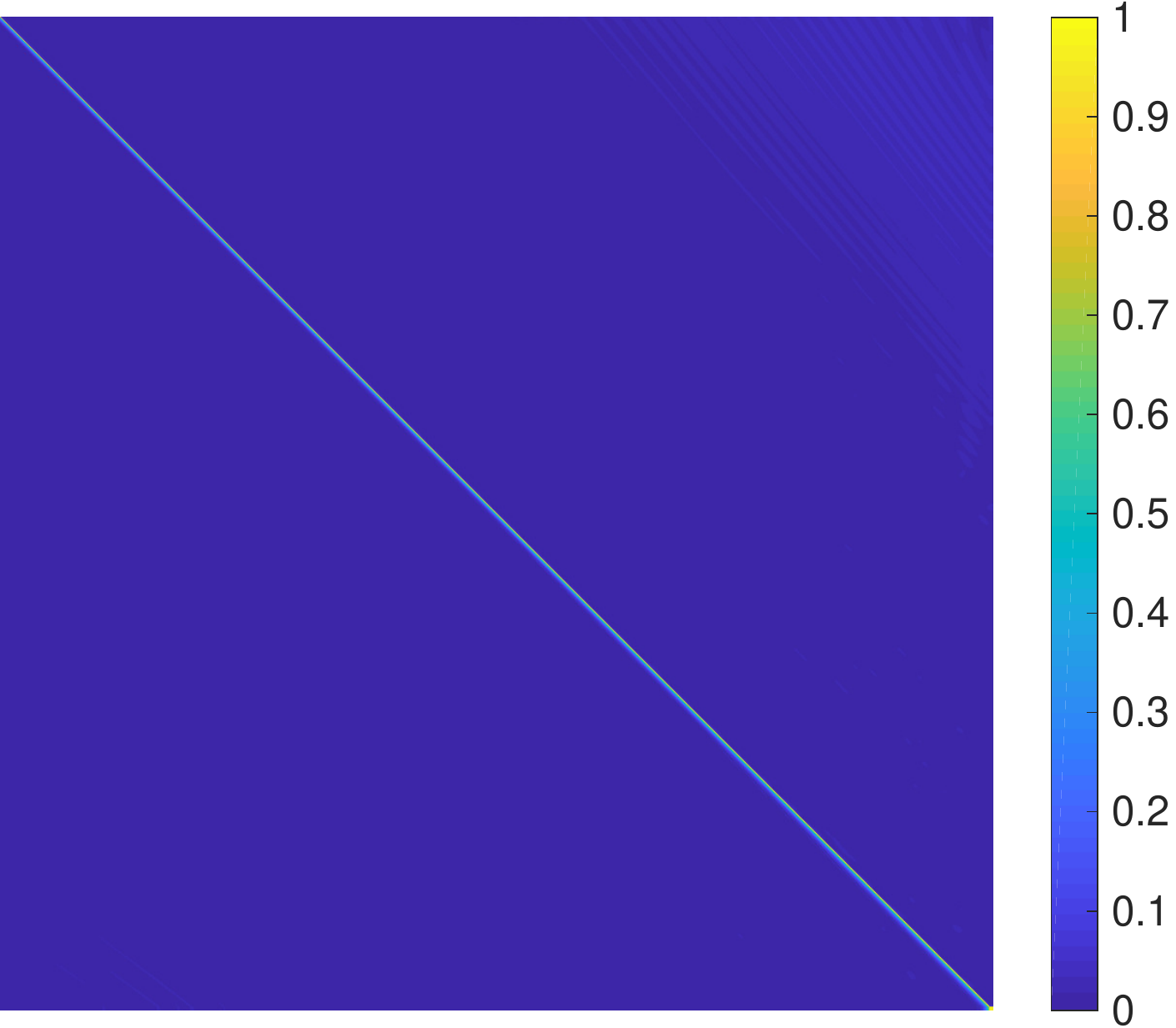} \\
       (d) & (e) & (f)
    \end{tabular}
  \end{center}
\caption{(a) A spiral scatterer. (b) The eigenvalue distribution of the impedance matrix $A$ corresponding to (a). (c) The eigenvalue distribution of $\tilde{L}^{-1}A\tilde{U}^{-1}$, the impedance matrix after preconditioning. (d) The real part of $A$. (e) The real part of $L+U-I$, where $L$ and $U$ are the LU factors of $A$, and $I$ is an identity matrix. (f) The magnitude of the entries of $\tilde{L}^{-1}A\tilde{U}^{-1}$. }
\label{fig:ex1}
\end{figure}

\begin{figure}[ht!]
  \begin{center}
    \begin{tabular}{ccc}
      \includegraphics[height=1.6in]{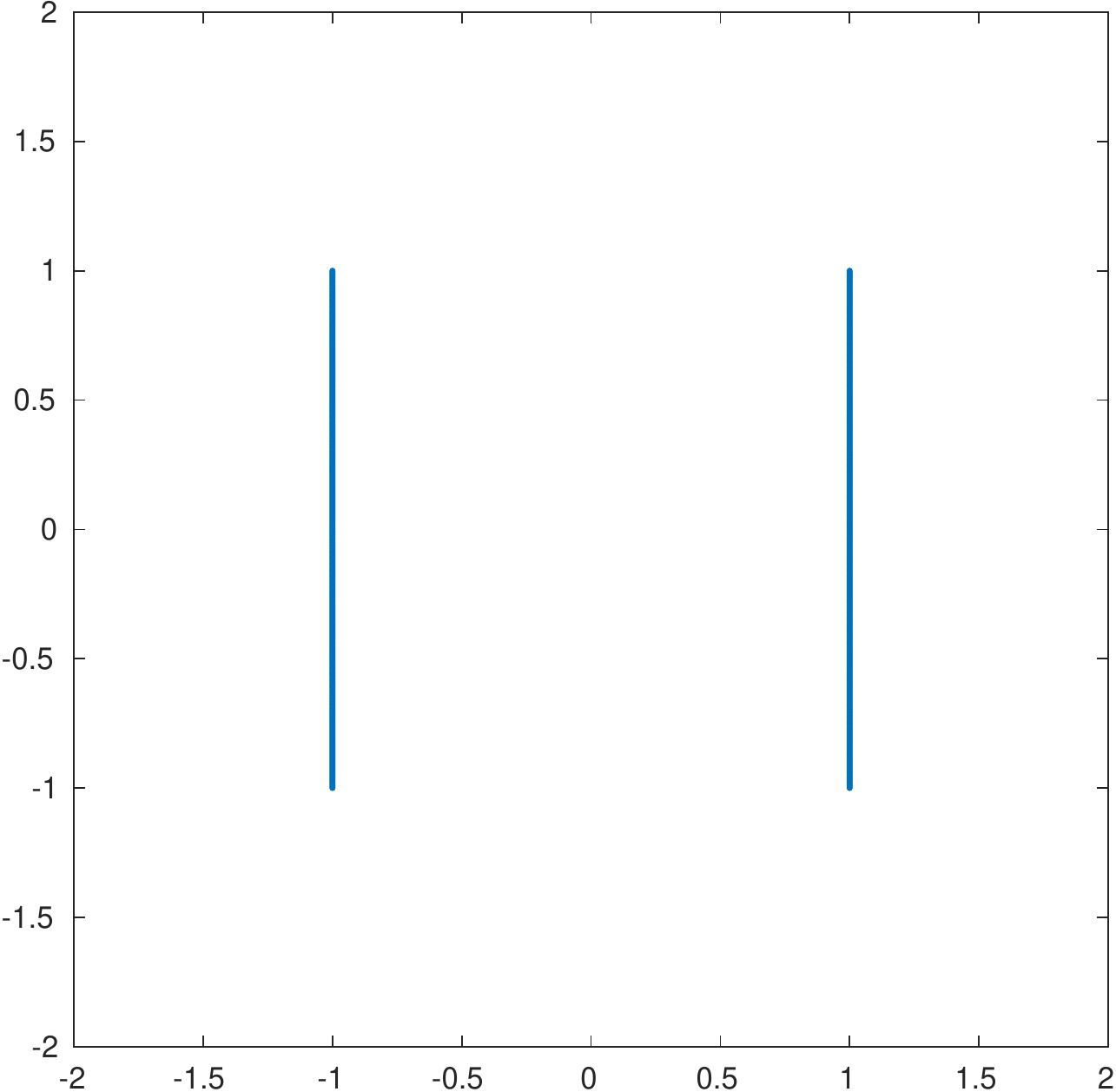} &  \includegraphics[height=1.6in]{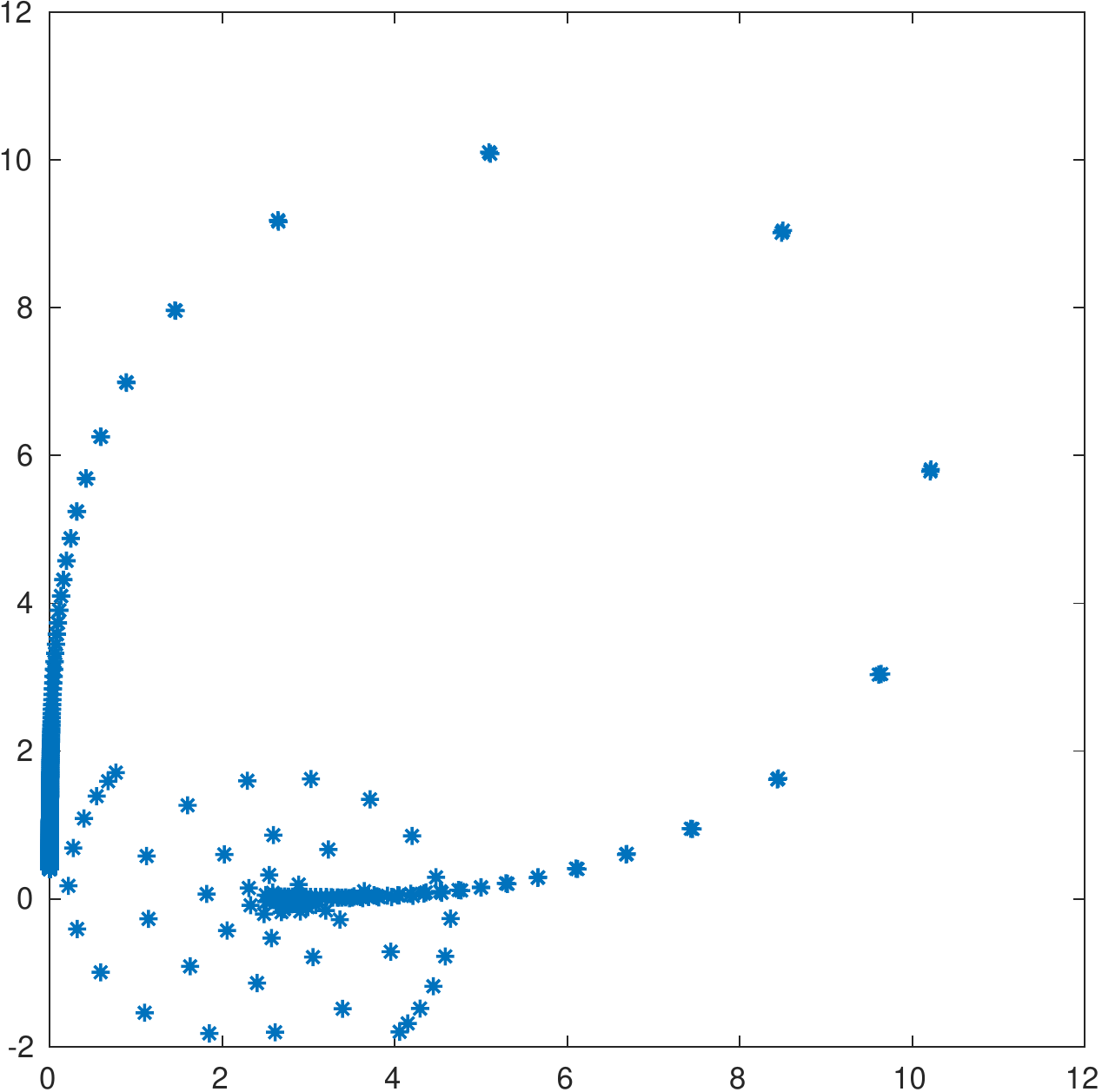} &
       \includegraphics[height=1.6in]{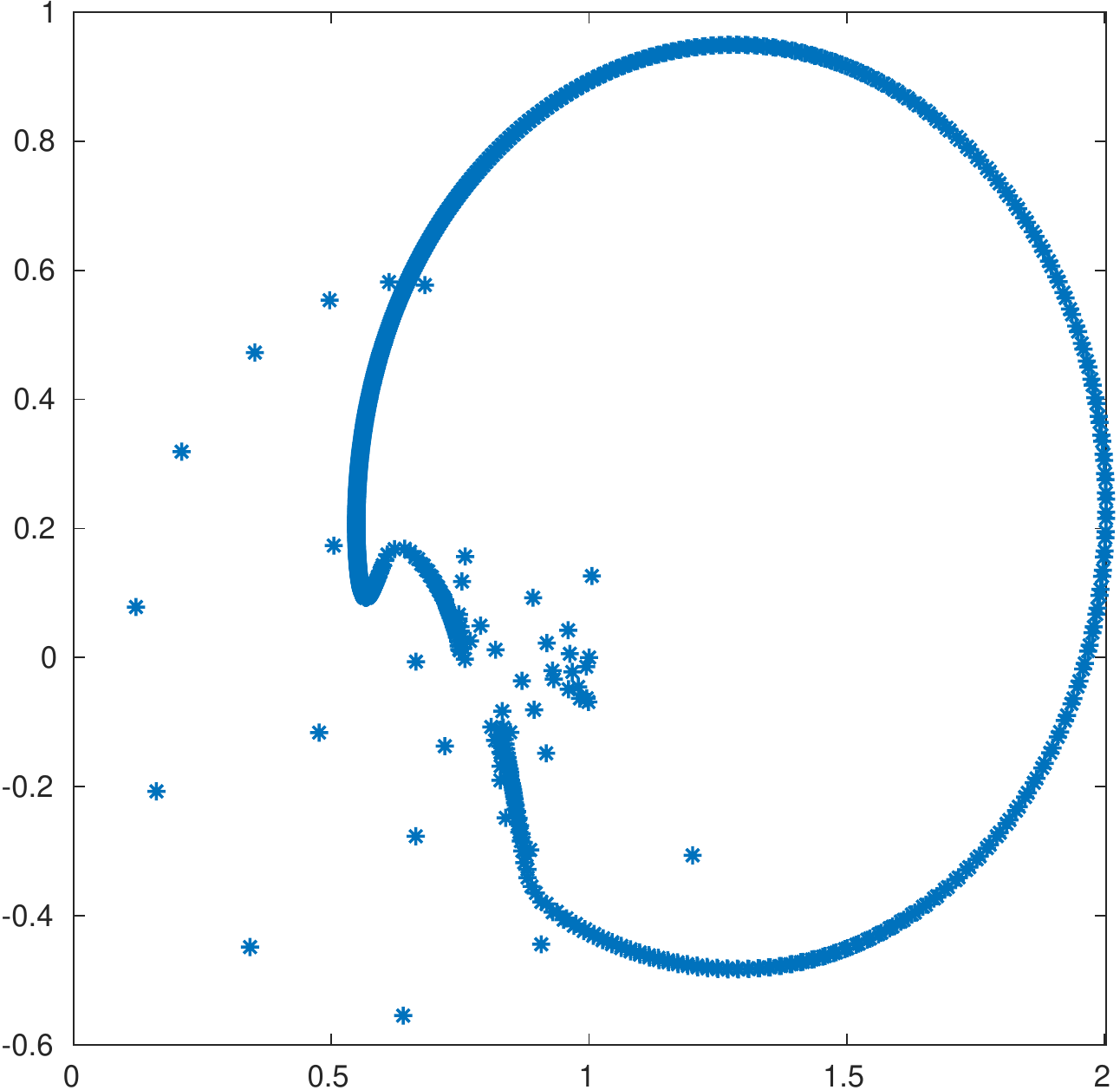} \\
       (a) & (b) & (c) \\
      \includegraphics[height=1.6in]{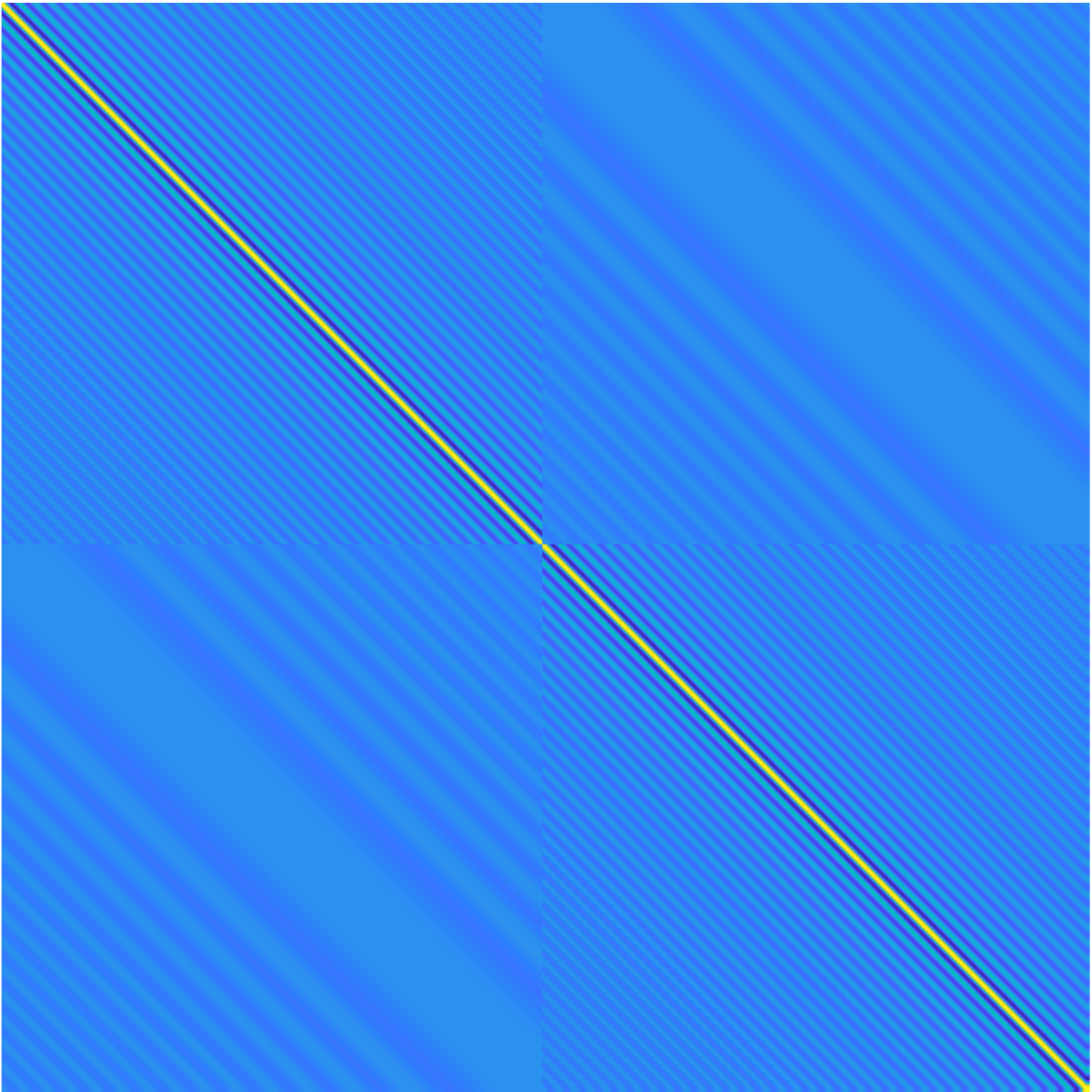} &  \includegraphics[height=1.6in]{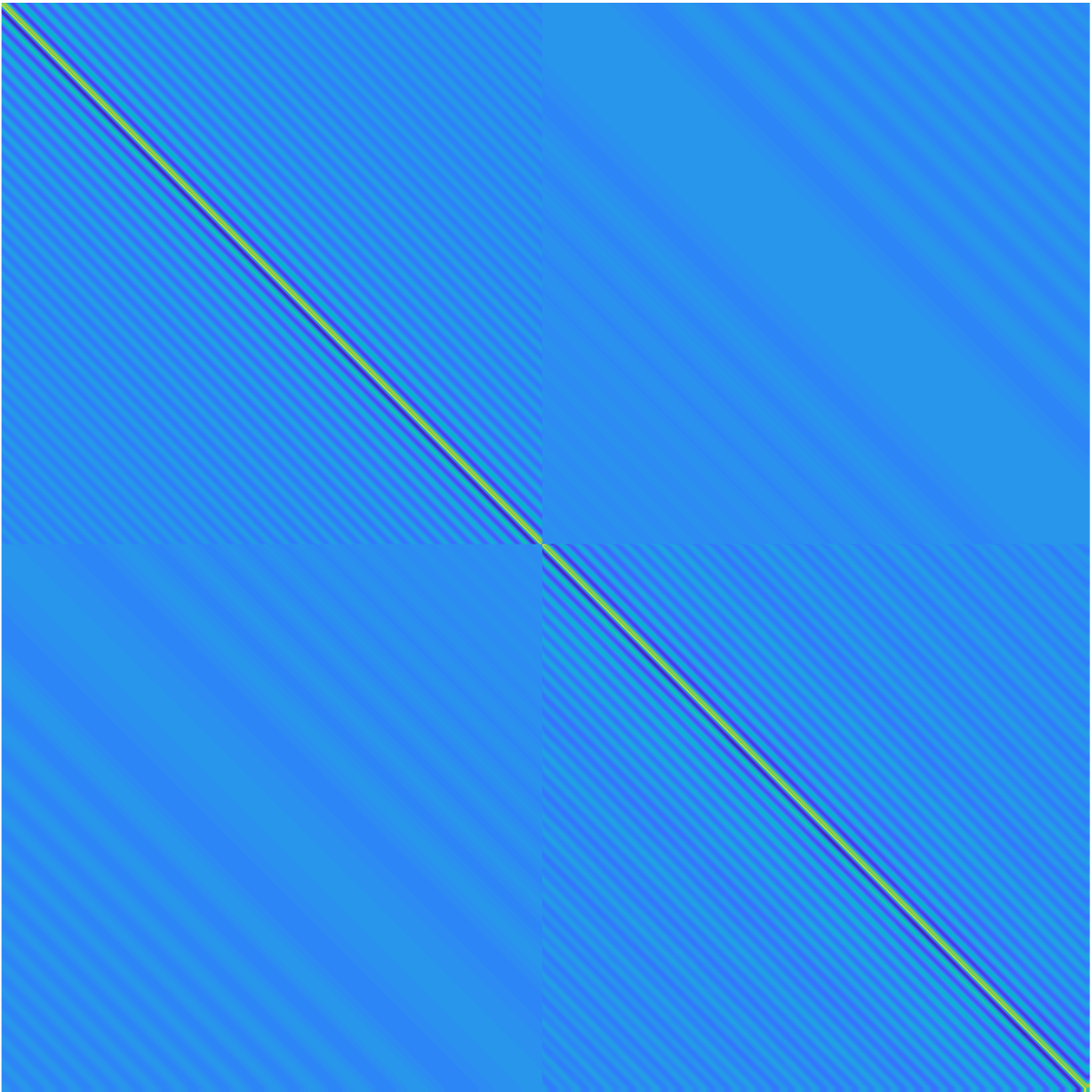} &
       \includegraphics[height=1.6in]{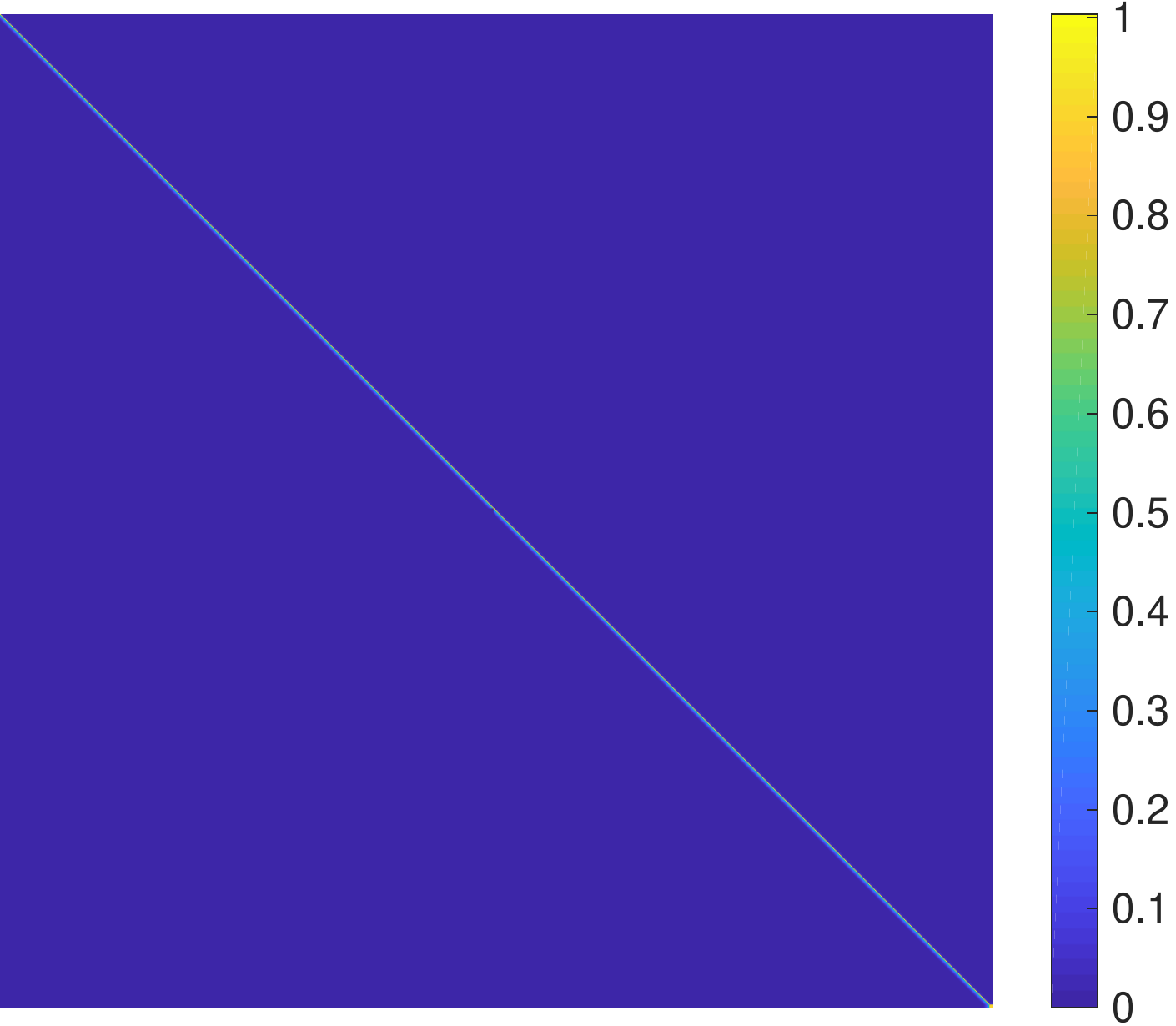} \\
       (d) & (e) & (f)
    \end{tabular}
  \end{center}
\caption{(a) Two parallel strips. (b) The eigenvalue distribution of the impedance matrix $A$ corresponding to (a). (c) The eigenvalue distribution of $\tilde{L}^{-1}A\tilde{U}^{-1}$, the impedance matrix after preconditioning. (d) The real part of $A$. (e) The real part of $L+U-I$, where $L$ and $U$ are the LU factors of $A$, and $I$ is an identity matrix. (f) The magnitude of the entries of $\tilde{L}^{-1}A\tilde{U}^{-1}$. }
\label{fig:ex2}
\end{figure}

The previous subsection has introduced $O(N\log^2N)$ algorithms for constructing and applying the H-IDBF of $A$ and forming $\tilde{L}$ and $\tilde{U}$ as a preconditioner. Next, we will introduce $O(N\log^2N)$ algorithms for applying the inverse of $\tilde{L}$ and $\tilde{U}$ to a vector in this subsection, i.e, solving triangular systems in a format of H-IDBFs. Similarly to solving a triangular system in a format of H-matrix, the triangular solver for H-IDBF can also be done recursively as in Algorithm \ref{alg:3} and \ref{alg:4}.

\vspace{0.25cm}
\begin{algorithm}[H]
 \KwData{A H-IDBF of $L\in\mathbb{C}^{N\times N}$ and a vector $b\in\mathbb{C}^N$.}
 \KwResult{A vector $x\in\mathbb{C}^{N}$ equal to $L^{-1}b$.}
 \eIf{$N\leq n_0$ }{
  Solve the triangualr system $Lx=b$ via standard LAPACK\;
   }{
   Partition the lower triangular system as 
  \[
  L=  \begin{pmatrix}
  F_{11} & 0\\
  F_{21} & F_{22}
  \end{pmatrix};
  \]
  Recursively apply Algorithm \ref{alg:3} with $F_{11}$ and $b(1:N/2)$ as the input to obtain $x(1:N/2)$ as the corresponding output\;
  Apply Algorithm \ref{alg:2} with $-F_{21}$ to obtain $b(N/2+1:N)\mathrel{-}=F_{21}x(1:N/2)$\;
  Recursively apply Algorithm \ref{alg:3} with $F_{22}$ and $b(N/2+1:N)$ as the input to obtain $x(N/2+1:N)$ as the corresponding output\;
  }
 \caption{An $O(N\log^2N)$ recursive algorithm for solving a lower triangular system $Lx=b$ when $L$ is stored in a format of H-IDBF. The notation ``$\mathrel{-}=$" in Line $5$ stands for the updating operator defined as: $a\mathrel{-}=b$ is equivalent to $a=a-b$ for any array $a$ and $b$.}
 \label{alg:3}
\end{algorithm}

\vspace{0.25cm}
\begin{algorithm}[H]
 \KwData{A H-IDBF of $U\in\mathbb{C}^{N\times N}$ and a vector $b\in\mathbb{C}^N$.}
 \KwResult{A vector $x\in\mathbb{C}^{N}$ equal to $U^{-1}b$.}
 \eIf{$N\leq n_0$ }{
     Solve the triangular system $Ux=b$ via standard LAPACK\;
   }{
     Partition the upper triangular system as
    \[
    U=  \begin{pmatrix}
    F_{11} & F_{12}\\
    0 & F_{22}
    \end{pmatrix};
    \]       
   Recursively apply Algorithm \ref{alg:4} with $F_{22}$ and $b(N/2+1:N)$ as the input to obtain $x(N/2+1:N)$ as the corresponding output\;
   Apply Algorithm \ref{alg:2} with $-F_{12}$ to obtain $b(1:N/2)\mathrel{-}=F_{12}x(1+N/2:N)$\;
   Recursively apply Algorithm \ref{alg:4} with $F_{11}$ and $b(1:N/2)$ as the input to obtain $x(1:N/2)$ as the corresponding output\;
  }
 \caption{An $O(N\log^2N)$ recursive algorithm for solving an upper triangular system $Ux=b$ when $U$ is stored in a format of H-IDBF. The notation ``$\mathrel{-}=$" in Line $5$ stands for the updating operator defined as: $a\mathrel{-}=b$ is equivalent to $a=a-b$ for any array $a$ and $b$.}
 \label{alg:4}
\end{algorithm}

\begin{figure}[ht!]
    \begin{center}
        \includegraphics[height=1in]{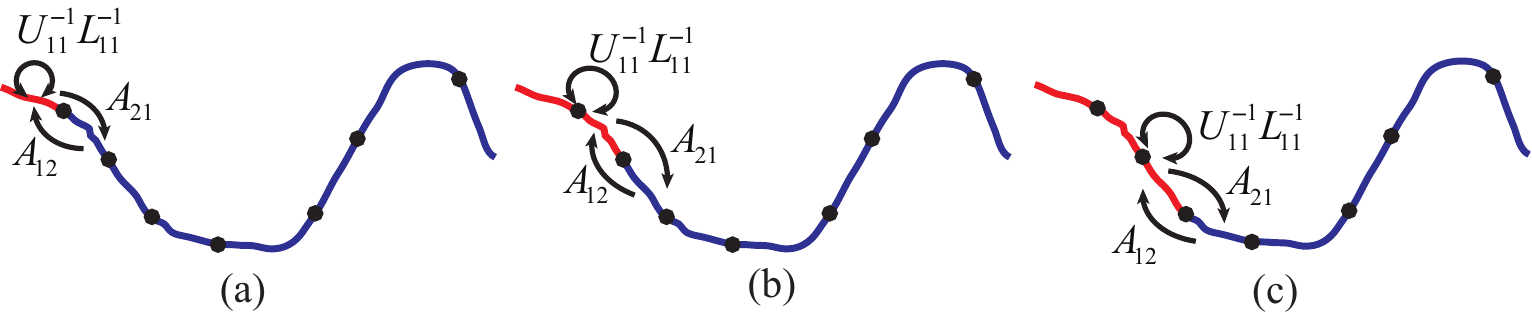}
    \end{center}
    \caption{{Eliminated and remaining unknowns at steps (a) $N/8$, (b) $N/4$ and (c) $3N/8$ of the LU factorization.} {Note that $A_{12}$ and $A_{21}$} are dense matrices whose dimensions corresponding to the number of unknowns in the red and blue colored curves.}
    \label{fig:illus}
\end{figure}

{
\subsection{Algorithm Analysis}
Here, we briefly analyze the effectiveness of the proposed LU preconditioner and the compressibility of off-diagonal blocks in the impedance matrix and its LU factors. 

\subsubsection{Oscillation patterns of $A$ and its LU factors}
\label{sec:analysis_lu}
{We observed that the oscillation pattern of $A$ resembles that of its LU factors. More specifically, $\tilde{L}\approx c_lL$ and $\tilde{U}\approx c_uU$, where $\tilde{L}$/$\tilde{U}$ are introduced in \eqref{eqn:Lt}/\eqref{eqn:Ut}, $L$ and $U$ are the LU factors of $A$, and $c_l$ and $c_u$ are scalar constants with $c_lc_u=1$. In this subsection, we provide some physical explanations of this observation assuming that} 1) the scatterer consists of smooth non-resonant surfaces; 2) the LU factorization follows a geometrical elimination order defined as follows. 

Assume that the scatterers consist of one simple contour with an analytic boundary. Let $\rho:[0,1)\rightarrow \mathbb{R}^2$ be the analytic map that maps $[0,1)$ on to the boundary  and discretize $[0,1)$ with a uniform grid $x_i=\frac{i-1}{N}$ for $i=1,\dots,N$. The geometrical order is the order that corresponds to the natural order of the discretized parameter $x_i$. When the scatterers consist of multiple simple contours, the ordering between contours is not studied here and {the optimal ordering} remains an open question.  

For simplicity, it is assumed that $A$ has been rescaled such that $A(i,i)\approx 1$. We want to show $\tilde{L}\approx L$ and $\tilde{U}\approx U$, respectively. Note that the LU factorization process at the step ${p}\in\{1,...,N\}$ can be rewritten as 
\begin{equation}
A=\begin{bmatrix}
A_{11} & A_{12}\\ 
A_{21} & A_{22}
\end{bmatrix}=\begin{bmatrix}
L_{11} & \\ 
A_{21}U_{11}^{-1} & I
\end{bmatrix}\begin{bmatrix}
I & \\ 
 & S_{22}
\end{bmatrix}\begin{bmatrix}
U_{11} & L_{11}^{-1}A_{12}\\ 
& I
\end{bmatrix},
\end{equation}
where $A_{11}=L_{11}U_{11}$ represents the completed LU factorization of the leading submatrix of sizes ${p\times p}$, and $S_{22}=A_{22}-A_{21}U_{11}^{-1}L_{11}^{-1}A_{12}$ denotes the updated Schur complement to be factorized in the next step. It suffices to show $L_{11}^{-1}A_{12}\approx A_{12}$, $A_{21}U_{11}^{-1}\approx A_{21}$, and $S_{22}\approx A_{22}$. As the factorization follows the geometrical order, let $c_1$ and $c_2$ denote the subscatterer corresponding to the eliminated (i.e., $A_{11}$) and remaining (i.e., $A_{22}$) rows. See the curves highlighted in red and blue in Figure \ref{fig:illus} (a)-(c) for illustrations. 

First note that $S_{22}$ represents the numerical Green's function that accounts for the curve $c_1$ as the background medium. As $c_1$ is smooth and $c_2$ resides on one side of $c_1$, the scattering component $A_{21}U_{11}^{-1}L_{11}^{-1}A_{12}$ does not dominate over the free-space interaction component $A_{22}$. {Physically, this means that the current induced on $c_1$ by sources on $c_2$, namely $U_{11}^{-1}L_{11}^{-1}A_{12}$, has weak Green's function interaction in the directions towards $c_2$.} Therefore we can use an aggressive approximation $S_{22}\approx A_{22}$.

We then show $L_{11}^{-1}A_{12}\approx A_{12}$ and $A_{21}U_{11}^{-1}\approx A_{21}$ by induction. Note that ${p}=1$ is trivial and assume that this holds true for step ${p}-1$, i.e., $L_{11}\approx \tilde{L}_{11}$ and $U_{11}\approx \tilde{U}_{11}$ at step ${p}$. 
Let $b_j$ denote the $j$th column of $A_{12}$, $j=1,...,N-{p}$, the vector $v=A_{11}^{-1}b_j$ is the current solution on $c_1$ residing in free space excited by a dipole source residing on $c_2$. {Note that $c_2$ is located on one side of $c_1$. For the linear system $A_{11}v=b_j$, we can define the forward operator $\tilde{U}_{11}$ and backward operator $\tilde{L}_{11}$ which only propagates the current away from and towards the source point, respectively.} As $c_1$ is smooth and non-resonant, {the linear system can be approximated as $\tilde{U}_{11}v\approx b_j$} by replacing the integral operator $A_{11}$ by the forward operator $\tilde{U}_{11}$ which only propagates the current away from the source point. {For example, using the forward operator, the current $v(i_1)$ does not contribute to the field $b_j(i_2)$ given that $i_1<i_2$ (i.e., $x_{i_1}$ represents a point that are further away from the source when compared to $x_{i_2}$). Note that the approximation $A_{11}v\approx\tilde{U}_{11}v$ does not guarantee that $A_{11}\approx\tilde{U}_{11}$. For more detailed explanation of this approximation, the readers are referred to Section III of \cite{475091}. From such approximation, it is easy to see that $b_j=A_{11}v=L_{11}U_{11}v\approx \tilde{L}_{11}\tilde{U}_{11}v\approx\tilde{L}_{11}b_j$. Note that the first approximate equality holds via induction.} In other words,  $\tilde{L}_{11}^{-1}b_j\approx b_j$ or equivalently ${L}_{11}^{-1}A_{12}\approx\tilde{L}_{11}^{-1}A_{12}\approx A_{12}$. Similarly, it can be shown that $A_{21}U_{11}^{-1}\approx A_{21}$. 

The analysis above explains the observation $\tilde{L}\approx L$,  $\tilde{U}\approx U$ and suggests that $\tilde{L}\tilde{U}$ can be used as an efficient preconditioner. We would like to highlight a few related works here. Our analysis is closely related to the fix-point iteration methods such as the forward-backward method {\cite{475091,496263}} or the symmetrical Gauss-Seidel iteration method \cite{1391151} for solving integral equations with a certain type of excitation vectors. When these methods are used as a preconditioner in a Krylov subspace iterative solver for arbitrary excitations, our physical explanations above show effectiveness of such preconditioner. Another related work is the sweeping preconditioner for differential equation solvers \cite{sweeping2011} which requires a geometrical elimination order such that the Schur complement represents well-compressible numerical Green's functions. Here we show that for surface integral equation solvers, a geometrical order can induce Schur complement that resembles the free-space Green's function for smooth, non-resonant scatterers. {Both algorithms rely on the fact that the geometry points corresponding to the remaining unknowns reside approximately on one side of the geometry points corresponding to the the already eliminated unknowns.}

\subsubsection{Butterfly compressibility of off-diagonal blocks}
\label{sec:analysis_rank}

Here we analyze the complementary low-rank property of the off-diagonal blocks of $A$, and it follows from the arguments in Section \ref{sec:analysis_lu} that the LU factors of $A$ are also butterfly compressible. Note that the compressibility of off-diagonal blocks resulting from strong-admissibility is well-studied in \cite{YingDirect,Butterfly1}, here we extend the compressibility analysis to the off-diagonal blocks from weak-admissibility. 

To simplify the analysis, we assume that the scatterer has an analytic boundary and its diameter and length are of order $1$. The parametrization of the boundary leads to an impedance matrix of size $N$ by $N$ and the frequency parameter ${\kappa}$ is of order $N$. Let $\rho:[0,1)\rightarrow \mathbb{R}^2$ be the analytic map that maps $[0,1)$ on to the boundary of the scatterer and discretize $[0,1)$ with a uniform grid $x_i=\frac{i-1}{N}$ for $i=1,\dots,N$. Let $\rho_i=\rho(x_i)$, then the impedance matrix $A$ is
\begin{align*}
 A_{ij} &=
  \begin{cases}
   {\frac{{\kappa}\eta_0w_j}{4}} H_0^{(2)}({\kappa}|\rho_i-\rho_j|),        & \text{if } i\neq j, \\
   {\frac{{\kappa}\eta_0w_i}{4}}\left[ 1-\mathrm{i} \frac{2}{\pi}\ln \left( \frac{\gamma {\kappa}{w_i}}{4e} \right)  \right],       & \text{otherwise}.
  \end{cases}
\end{align*}
It is sufficient to show the butterfly compressibility of the off-diagonal block for $1\leq j<\frac{N}{2}<i\leq N$ assuming $N$ is an even number. The analysis for other off-diagonal blocks is similar. By the theory of non-oscillatory phase functions for special functions \cite{watson1995treatise,HEITMAN20151,Bremer2019}, the Hankel function $H_0^{(2)}(x)$ for $x\in(0,\infty)$ admits a representation
\[
H_0^{(2)}(x)=\sqrt{\frac{2}{\pi x}}\frac{1}{\sqrt{\alpha'(x)}} e^{-i\alpha(x)},
\]
where $\alpha(x)$ is an analytic non-oscillatory phase function. {For more details about the definition, computation, and asymptotic expansion of the non-oscillatory phase function, the reader is referred to Section 2.5 of \cite{Bremer2019}, where the notation $\alpha(t)$ is used to represent the phase function as well. }
If we introduce a non-oscillatory matrix $B$ and a purely oscillatory matrix $C$ as follows,
\begin{align}\label{eqn:B}
 B_{ij} &=
  \begin{cases}
   \frac{{\kappa}\eta_0w_j}{4}   \sqrt{\frac{2}{\pi {\kappa}|\rho_i-\rho_j|}}\frac{1}{\sqrt{\alpha'({\kappa}|\rho_i-\rho_j|)}}     ,        & \text{if } i\neq j, \\
   {\frac{{\kappa}\eta_0w_i}{4}}\left[ 1-\mathrm{i} \frac{2}{\pi}\ln \left( \frac{\gamma {\kappa}{w_i}}{4e} \right)  \right],       & \text{otherwise},
  \end{cases}
\end{align}
and
\begin{align}\label{eqn:C}
 C_{ij} &=
  \begin{cases}
 e^{-i \alpha({\kappa}|\rho_i-\rho_j|)}   ,      & \text{if } i\neq j, \\
   1 ,      & \text{otherwise},
  \end{cases}
\end{align}
then $A$ is the Hadamard product of $B$ and $C$.

 \begin{figure}[ht!]
  \begin{center}
    \begin{tabular}{c}
      \includegraphics[height=1.6in]{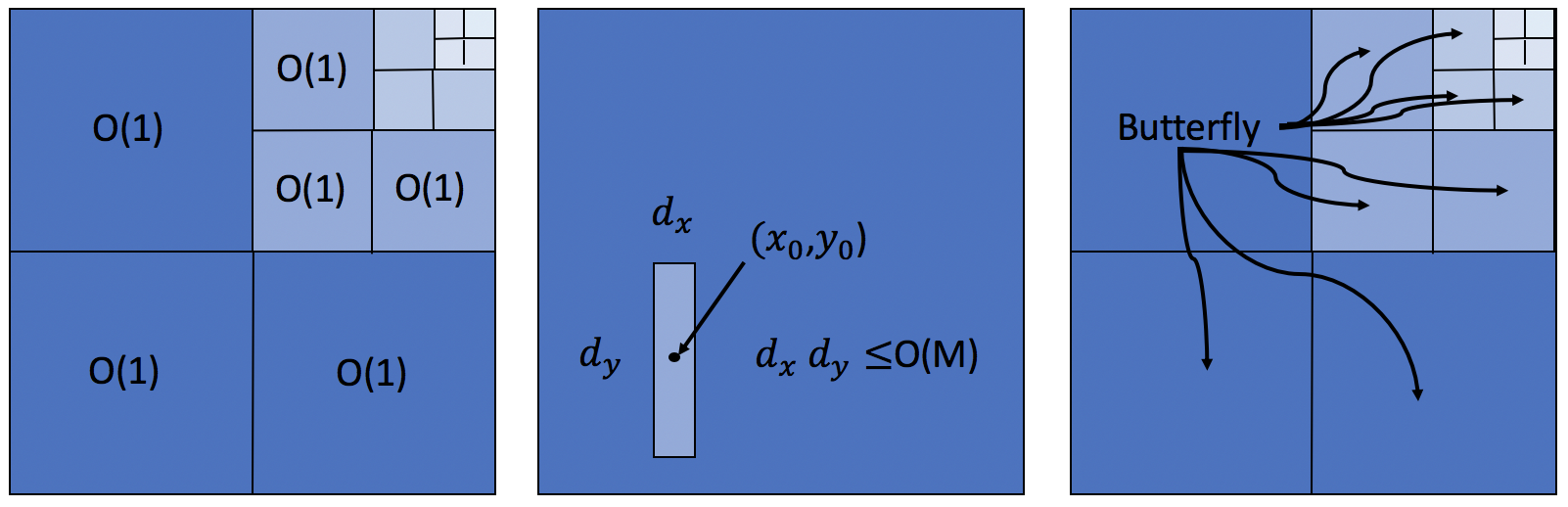} 
    \end{tabular}
  \end{center}
\caption{Left: the low-rank structure of the off-diagonal matrix of the non-oscillatory matrix $B$ for $1\leq j<\frac{N}{2}<i\leq N$. All submatrices visualized are rank $O(1)$. Middle: any contiguous submatrix of $C$ with $O(M)$ entries is rank $O(1)$ if the submatrix is away from the diagonal entries of $C$ with a distance at least $O(M)$ entries for $M=1,\dots,N$. Right: the low-rank structure of the off-diagonal matrix of the purely oscillatory matrix $C$ for $1\leq j<\frac{N}{2}<i\leq N$. All visualized submatrices are complementary low-rank matrices and hence butterfly compressible.  }
\label{fig:proof}
\end{figure}

Note that ${\kappa}|\rho_i-\rho_j|\geq O(1)$ for all $i\neq j$. Hence, $ M(x_i,x_j):=\sqrt{\frac{2}{\pi {\kappa}|\rho(x_i)-\rho(x_j)|}}\frac{1}{\sqrt{\alpha'({\kappa}|\rho(x_i)-\rho(x_j)|)}}    $ is analytic and non-oscillatory in $(x_i,x_j)$ leading to the following lemma.

\begin{lemma}\label{lem:1}
Suppose the map $\rho$ associated to the boundary of the scatterer is analytic and nearly isomatric, i.e., $c_1|x-y|\leq |\rho(x)-\rho(y)|\leq c_2|x-y|$ for some positive constants $c_1$ and $c_2$. Suppose $B$ of size $N$ by $N$ is defined in \eqref{eqn:B}, then the rank of the off-diagonal block of $B$ for $1\leq j<\frac{N}{2}<i\leq N$ is $O(\log(N))$ with a prefactor depending on a relative approximation error $\epsilon$ and independent of $N$.
\end{lemma}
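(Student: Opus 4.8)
The plan is to reduce the statement to the known low-rank estimate for \emph{strongly} admissible blocks of an analytic, non-oscillatory kernel, after handling the weakly admissible corner by a dyadic partition. First I would strip off the diagonal scaling: writing $B_{ij}=\frac{\kappa\eta_0 w_j}{4}M(x_i,x_j)$ for $i\ne j$, the off-diagonal block of $B$ is the matrix $[M(x_i,x_j)]_{N/2<i\le N,\,1\le j\le N/2}$ multiplied on the right by $\mathrm{diag}(\kappa\eta_0 w_j/4)$; since $w_j=\Theta(1/N)$ this positive diagonal factor has bounded condition number and affects the $\epsilon$-rank only up to a constant adjustment of $\epsilon$, so it suffices to bound the numerical rank of the $M$-block. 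On this index region $|x_i-x_j|\ge 1/N$, so by near-isometry and $\kappa=\Theta(N)$ the argument $t:=\kappa|\rho(x_i)-\rho(x_j)|\ge c_1\kappa/N=\Theta(1)$ stays bounded away from $0$; hence $M$ is the analytic non-oscillatory phase function $\alpha$ (together with $t^{-1/2}$) composed with the analytic map $(x,y)\mapsto|\rho(x)-\rho(y)|$ and evaluated away from the $t=0$ singularity, so $M(x,y)$ is jointly analytic and non-oscillatory on the whole region, with a complex analyticity neighborhood of width proportional to the local scale $|x-y|$ (this is exactly the content of the phase-function estimates recalled before the lemma \cite{Bremer2019}).

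Next I would partition the $M$-block by a dyadic recursion toward the corner $(x_i,x_j)\approx(1/2,1/2)$: split the rows $[N/2{+}1,N]$ at $3N/4$ and the columns $[1,N/2]$ at $N/4$; of the four resulting sub-blocks, the three that do not touch the corner have their row and column clusters separated by at least their own size (strong admissibility), while the fourth has half the linear size and is recursed upon by the same rule. After $L=O(\log N)$ steps the remaining corner sub-block has size at most $n_0$ and is kept as a leaf. This yields a partition of the $M$-block into $3L+1=O(\log N)$ pieces: $3L$ strongly admissible ones and one leaf of size $\le n_0$.

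For each strongly admissible piece, the argument $t$ ranges over an interval $[t_0,\Lambda t_0]$ of $N$-independent ratio $\Lambda$ with $t_0\ge\Theta(1)$, and after rescaling variables to the unit box, $M$ is analytic in a complex neighborhood of the rescaled block of $N$-independent size; the standard estimate for strongly admissible blocks of analytic non-oscillatory kernels (tensor-product Chebyshev interpolation in $x$ and $y$, cf.\ \cite{YingDirect,Butterfly1}) then supplies a rank-$r(\epsilon)$ approximation with relative error $\epsilon$, where $r(\epsilon)$ is polynomial in $\log(1/\epsilon)$ and independent of $N$ and of the level. The leaf contributes rank $\le n_0$. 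Since the pieces partition the entries of the block, assembling the piecewise approximations gives a matrix of rank $\le 3L\,r(\epsilon)+n_0$ whose Frobenius error is $\le\epsilon$ times the block norm; therefore the numerical rank of the off-diagonal block of $B$ is $O(\log N)$ with a prefactor that depends on $\epsilon$ (and on $c_1,c_2,n_0$) but not on $N$.

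I expect the main obstacle to be the uniformity over levels in the previous paragraph: one must verify that the strongly admissible pieces arbitrarily deep in the corner recursion --- where $t_0$ is only a fixed constant rather than growing like $N$ --- still admit an $N$-independent rank bound. This is precisely where the non-oscillatory phase-function theory is needed: one uses it to show that $\alpha'(t)$ is analytic and bounded away from $0$ in a complex neighborhood of $(0,\infty)$ whose width at $t$ is $\gtrsim t$, so that the per-block approximation degree does not deteriorate as the recursion descends. Granting that, the dyadic bookkeeping and the strongly admissible rank estimate itself are routine.
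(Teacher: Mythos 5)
Your proposal is correct and follows essentially the same route as the paper's proof of Lemma~\ref{lem:1}: partition the weakly admissible off-diagonal block dyadically toward the near-diagonal corner into $O(\log N)$ strongly admissible pieces plus a bounded leaf, approximate each piece to rank $O(1)$ via Chebyshev interpolation of the analytic, non-oscillatory amplitude $M$, and sum to obtain total rank $O(\log N)$. Your write-up is in fact more explicit than the paper's brief argument, which only gestures at the hierarchical partition via Figure~\ref{fig:proof} (left) and leaves the strong-admissibility bookkeeping implicit, and you correctly flag the one genuine subtlety --- uniformity of the per-piece rank bound as the recursion descends to where $t=\kappa|\rho(\bar x)-\rho(\bar y)|$ is only $O(1)$ --- which the paper's proof glosses over.
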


\begin{proof}
Consider the range $x\in[x_0-\frac{d_x}{2},x_0+\frac{d_x}{2}]$ and  $y\in[y_0-\frac{d_y}{2},y_0+\frac{d_y}{2}]$ with $(x_0-\frac{d_x}{2})-(y_0+\frac{d_y}{2})\geq \frac{1}{N}$. Then as discussed previously, $ M(x,y)$ is analytic and non-oscillatory in $(x,y)$. Hence, as long as $d_x$ and $d_y$ are sufficiently small depending on $M(x,y)$ and are still of order $1$, a low-rank separation approximation for $M(x,y)$ with a relative error $\epsilon$ can be constructed via Taylor expansion or Chebyshev interpolation. The number of terms in the low-rank separation approximation depends only on $\epsilon$ and is independent of $N$. The conclusion is an immediate result of this fact.

For example, let us assume low-rank approximations are valid as long as $d_x\leq \frac{1}{4}$ and $d_y\leq\frac{1}{4}$. Then the off-diagonal block of $B$ is partitioned hierarchically as in Figure \ref{fig:proof} (left), the submatries of which have rank $O(1)$ independent of $N$. For an off-diagonal matrix of size $O(N)$ by $O(N)$, there are $O(\log(N))$ levels of submatrices. Hence, the total rank of the whole off-diagonal matrix in Figure \ref{fig:proof} (left) is $O(\log(N))$.
\end{proof}

The next lemma below analyzes the low-rank structure of the purely oscillatory matrix $C$.

\begin{lemma}\label{lem:2}
Suppose the map $\rho$ associated to the boundary of the scatterer is analytic and nearly isomatric, i.e., $c_1|x-y|\leq |\rho(x)-\rho(y)|\leq c_2|x-y|$ for some positive constants $c_1$ and $c_2$. Consider an off-diagonal contiguous submatrix of $C$ in \eqref{eqn:C} of size $d_x$ by $d_y$ with $d_xd_y\leq M$ and the submatrix is at least $O(M)$ entries away from the diagonal of $C$.  Then the submatrix is rank $O(1)$ with a prefactor depending on a relative approximation error $\epsilon$ and independent of $N$, when $M$ is larger than an $O(1)$ constant independent of $N$.
\end{lemma}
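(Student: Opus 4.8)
The plan is to strip a rank-one ``carrier'' off the submatrix so that only a slowly varying \emph{residual phase} remains, bound that residual phase using the non-oscillatory structure of $\alpha$ together with the two hypotheses on the submatrix, and then read off the rank from standard Chebyshev-interpolation estimates. Write $\phi(x,y)=\alpha({\kappa}|\rho(x)-\rho(y)|)$, so the entries of the submatrix are $e^{-i\phi(x_i,x_j)}$ with $x_i,x_j$ running over two contiguous sub-grids; since the submatrix is off-diagonal, no entry equals the special value $C_{ii}=1$. Let $(x_0,y_0)$ be the midpoint of the corresponding parameter box and set $\psi(x,y)=\phi(x,y)-\phi(x,y_0)-\phi(x_0,y)+\phi(x_0,y_0)$. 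Then
\[
e^{-i\phi(x,y)}=e^{-i\phi(x,y_0)}\,e^{-i\phi(x_0,y)}\,e^{i\phi(x_0,y_0)}\,e^{-i\psi(x,y)},
\]
and the first three factors are unimodular and depend on $x$ alone, on $y$ alone, and on neither; they amount to left/right multiplication of the submatrix by unimodular diagonal matrices, which changes neither the rank nor the relative Frobenius error. Hence it suffices to produce a rank-$O_\epsilon(1)$ approximation of the matrix with entries $e^{-i\psi(x_i,x_j)}$.

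First I would assemble the pointwise estimates. From the non-oscillatory phase-function theory for $H_0^{(2)}$ one has, on any interval $[c_0,\infty)$ bounded away from $0$, that $\alpha$ is analytic with $|\alpha'(z)|\leq C$ and with the decay $|\alpha''(z)|\leq C/z$ (for the Hankel phase in fact $\alpha''(z)=O(z^{-3})$), with $C=C(c_0)$. From $\rho$ analytic and nearly isometric one gets $|\partial_x r|,|\partial_y r|\leq C$ and $|\partial_x\partial_y r|\leq C/r$ where $r=|\rho(x)-\rho(y)|$, the first pair because the numerator of $\partial_x r=((\rho(x)-\rho(y))\!\cdot\!\rho'(x))/r$ is itself $O(r)$ and the second by one more differentiation. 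Now invoke the hypotheses: being at least $O(M)$ entries from the diagonal forces $|x_i-x_j|\geq cM/N$, hence $r\geq c_1cM/N$, and since ${\kappa}=\Theta(N)$ this gives ${\kappa}r\geq c_3M$; having at most $O(M)$ entries is $d_xd_y\leq M$. Combining these in the chain rule,
\[
|\partial_x\partial_y\phi|=\bigl|{\kappa}^2\alpha''({\kappa}r)\,\partial_x r\,\partial_y r+{\kappa}\,\alpha'({\kappa}r)\,\partial_x\partial_y r\bigr|\leq \frac{C{\kappa}^2}{{\kappa}r}+\frac{C{\kappa}}{r}\leq C'\,\frac{N^2}{M}
\]
on the box. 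Since $\psi(x,y_0)=\psi(x_0,y)=0$, the identity $\psi(x,y)=\int_{x_0}^{x}\!\int_{y_0}^{y}\partial_s\partial_t\phi\,dt\,ds$ then yields $|\psi(x,y)|\leq \tfrac{d_x}{2N}\cdot\tfrac{d_y}{2N}\cdot C'\tfrac{N^2}{M}\leq C'/4$ on the box --- an absolute constant, independent of $N$, $M$, and of the size and location of the box.

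Next I would convert $\sup|\psi|=O(1)$ into a rank bound. Rescale the parameter box to $[-1,1]^2$. Repeating the estimates above on a fixed (small, $N$- and $M$-independent) complex polydisc around $[-1,1]^2$ --- checking that $\rho$ continues analytically there, that $|r|$ stays comparable to its real values and bounded away from $0$ and $\infty$, and that $\alpha$ stays analytic with the same asymptotic bounds on the relevant sector of arguments --- shows that $\psi$, and hence $e^{-i\psi}$, is analytic and uniformly bounded on that polydisc. By the standard Bernstein-type estimate for bivariate Chebyshev interpolation, the degree-$(n,n)$ Chebyshev interpolant of $e^{-i\psi}$ then has sup-error $\leq C\vartheta^{-n}$ for some fixed $\vartheta>1$ and has rank at most $n+1$; taking $n=O(\log(1/\epsilon))$ gives entrywise error $\leq\epsilon$. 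Multiplying back the rank-one unimodular carrier and using $|C_{ij}|=1$ (so the entrywise error equals the relative Frobenius error after dividing by the common factor $\sqrt{d_xd_y}=\|C_{\mathrm{sub}}\|_F$) produces a rank-$O_\epsilon(1)$ approximation of the submatrix with relative error $\epsilon$, the rank depending only on $\epsilon$ and the fixed geometric constants.

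The hard part will be the middle paragraph --- producing the bound $|\partial_x\partial_y\phi|=O(N^2/M)$ honestly, since this is where the three hypotheses and the special structure of $\alpha$ must all conspire. It relies essentially on the \emph{decay} $\alpha''(z)=O(1/z)$ of the non-oscillatory Hankel phase: mere boundedness of $\alpha''$ would only give $|\psi|=O(M)$, which is useless for large $M$. It also relies on bounding the mixed derivative of $r=|\rho(x)-\rho(y)|$ by $O(1/r)$ rather than by a constant, and on the bookkeeping that ties $r\gtrsim M/N$, ${\kappa}=\Theta(N)$, and $d_xd_y\leq M$ together so that the two powers of $N$ from ${\kappa}^2$ are exactly absorbed. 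Carrying all these bounds onto the complex polydisc --- needed so that Chebyshev theory applies --- is routine but must be handled carefully near the corner of the box where $r$ is smallest.
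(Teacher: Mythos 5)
Your proof is correct and follows essentially the same strategy as the paper's: strip off a separable unimodular carrier, show the remaining cross-phase has mixed partial $\partial_{xy}\phi = O(N^2/M)$ by combining (i) the decay of $\alpha',\alpha''$ from non-oscillatory-phase theory, (ii) the near-isometry of $\rho$, and (iii) the hypotheses $\kappa r\gtrsim M$ and $d_xd_y\leq M$, and conclude the residual phase is $O(1)$ over the box. The bookkeeping that cancels both powers of $N$ against $\kappa r\gtrsim M$ and $|(x-x_0)(y-y_0)|\lesssim M/N^2$ is identical to the paper's.

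The two cosmetic differences are actually improvements. First, you isolate the separable part by the exact identity $\psi(x,y)=\phi(x,y)-\phi(x,y_0)-\phi(x_0,y)+\phi(x_0,y_0)$ and then bound $\psi$ by the double integral $\int_{x_0}^x\int_{y_0}^y\partial_s\partial_t\phi$; the paper instead writes (their Eq.~(\ref{eqn:sp})) $\phi(x,y)=[\text{sep.}]+\partial_{xy}\phi(\bar{x},\bar{y})(x-x_0)(y-y_0)$ with a single intermediate point $(\bar{x},\bar{y})$ and then analyzes the fixed-$\omega$ matrix $D$. Your integral form sidesteps the fact that in the paper's form $(\bar{x},\bar{y})$ really depends on $(x,y)$, so $D$ is not literally the matrix they write; your version is exact and cleaner. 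Second, you flesh out the last step (``$\sup|\psi|=O(1)$ $\Rightarrow$ rank $O_\epsilon(1)$'') via analytic continuation to a fixed polydisc and bivariate Chebyshev interpolation, where the paper simply asserts ``non-oscillatory and hence rank $O(1)$.'' That extra paragraph is the only place where work remains in your plan (checking that $\rho$, $r$, and $\alpha$ admit uniformly controlled analytic continuations near the corner of the box), and you have already flagged it; it is routine but should indeed be carried out with care.
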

\begin{proof}
Without loss of generality, we consider
\begin{equation}\label{eqn:range}
x\in[x_0-\frac{d_x}{2N},x_0+\frac{d_x}{2N}]\qquad \text{and}\qquad y\in[y_0-\frac{d_y}{2N},y_0+\frac{d_y}{2N}]
\end{equation}
with 
\[
d_x d_y\leq M \qquad\text{and}\qquad (x_0-\frac{d_x}{2N})-(y_0+\frac{d_y}{2N})\geq O( \frac{M}{N}). 
\] 
Recall that $x$ and $y$ in $[0,1]$ are discretized with $N$ uniform grid points to form the matrix $C$. Hence, any submatrix in Lemma \ref{lem:2} is from the discretization of the conditions above. See Figure \ref{fig:proof} (middle) for an illustration of the submatrix of $C$ corresponding to the range in \eqref{eqn:range}.

By the Taylor expansion of $\alpha({\kappa}|\rho(x)-\rho(y)|)$ at the point $(x_0,y_0)$, there exists a function $\beta(x)$ and a function $\gamma(y)$ such that
\begin{equation}\label{eqn:sp}
e^{-i\alpha({\kappa}|\rho(x)-\rho(y)|)}=\beta(x) e^{-i \partial_{xy}\alpha({\kappa}|\rho(\bar{x})-\rho(\bar{y})|) (x-x_0)(y-y_0)}\gamma(y),
\end{equation}
with $\bar{x}\in [x_0-\frac{d_x}{2N},x_0+\frac{d_x}{2N}]$ and $\bar{y}\in [y_0-\frac{d_y}{2N},y_0+\frac{d_y}{2N}]$. Hence, the separability of the function $e^{-i\alpha({\kappa}|\rho(x)-\rho(y)|)}$ is equivalent to the separability of $e^{-i \partial_{xy}\alpha({\kappa}|\rho(\bar{x})-\rho(\bar{y})|) (x-x_0)(y-y_0)}$, i.e., to show Lemma \ref{lem:2} for the matrix $C$, it is sufficient to show the same property for the matrix $D$ defined below.
\begin{align}\label{eqn:D}
 D_{ij} &=
  \begin{cases}
 e^{-i \omega (x_i-x_0)(y_j-y_0)}    ,      & \text{if } i\neq j, \\
   1 ,      & \text{otherwise},
  \end{cases}
\end{align}
where $\omega=\partial_{xy}\alpha({\kappa}|\rho(\bar{x})-\rho(\bar{y})|) $ and  $x_i=y_i=\frac{i-1}{N}$ for $1\leq i\leq N$.

Let $\rho(x)=[\rho_1(x),\rho_2(x)]$. Then 
\[
\partial_{x}\alpha({\kappa}|\rho(x)-\rho(y)|) ={\kappa} \alpha'({\kappa}|\rho(x)-\rho(y)|)\frac{(\rho_1(x)-\rho_1(y))\rho_1'(x)+(\rho_2(x)-\rho_2(y))\rho_2'(x)}{|\rho(x)-\rho(y)|},
\]
and 
\begin{eqnarray*}
& &\partial_{xy}\alpha({\kappa}|\rho(x)-\rho(y)|)\\
&=&- {\kappa}^2 \alpha''({\kappa}|\rho(x)-\rho(y)|)\frac{(\rho_1(x)-\rho_1(y))\rho_1'(x)+(\rho_2(x)-\rho_2(y))\rho_2'(x)}{|\rho(x)-\rho(y)|}\\
& & \frac{(\rho_1(x)-\rho_1(y))\rho_1'(y)+(\rho_2(x)-\rho_2(y))\rho_2'(y)}{|\rho(x)-\rho(y)|}\\
&&-{\kappa} \alpha'({\kappa}|\rho(x)-\rho(y)|)\frac{\rho_1'(y)\rho_1'(x)+\rho_2'(y)\rho_2'(x)}{|\rho(x)-\rho(y)|}\\
&&+ {\kappa} \alpha'({\kappa}|\rho(x)-\rho(y)|) \frac{ (\rho_1(x)-\rho_1(y))\rho_1'(x)+(\rho_2(x)-\rho_2(y))\rho_2'(x)}{|\rho(x)-\rho(y)|^2}\\
&& \frac{(\rho_1(x)-\rho_1(y))\rho_1'(y)+(\rho_2(x)-\rho_2(y))\rho_2'(y)}{|\rho(x)-\rho(y)|}.
\end{eqnarray*}
Hence, for the range considered in \eqref{eqn:range}, we have $|\bar{x}-\bar{y}|\geq O(\frac{M}{N})$, then
\[
|\partial_{xy}\alpha({\kappa}|\rho(\bar{x})-\rho(\bar{y})|)|\leq O({\kappa}^2 \alpha''({\kappa}|\rho(\bar{x})-\rho(\bar{y})|) + \frac{Nk}{M}\alpha'(k|\rho(\bar{x})-\rho(\bar{y})|))),
\]
since $\rho$ is nearly isometric. Note that $\alpha(x)$ is non-oscillatory and analytic with the property (see Equations (57) in  \cite{Bremer2019}) that
\[
\lim_{x\rightarrow\infty} \alpha'(x)=1.
\]
Hence, there exists a positive constant $b_1$ independent of $N$ such that 
\[
\max_{x\in[a_1,\infty)}|\alpha'(x)|\leq b_1,
\]
where $a_1$ is a positive number such that $a_1\leq {\kappa}|\rho(x_i)-\rho(x_j)|$ as long as $i\neq j$, where $x_i=\frac{i-1}{N}$ for $i=1,\dots,N$, i.e., $a_1=O(1)$. Hence,
\[
|\frac{N{\kappa}}{M}\alpha'({\kappa}|\rho(\bar{x})-\rho(\bar{y})|))|\leq O(\frac{N{\kappa}}{M}).
\]
By the asymptotics of Bessel functions in Equations (63) in Section 2.5 of \cite{Bremer2019}, the following expansion is valid for all $x>0$:
\[
\alpha''(x)\sim \frac{-1}{4x^3}+\frac{25}{32x^5}+\frac{-3219}{512x^7}+\dots
\]
By a finite term truncation of the above expansion, there exist positive constants $c_2$ and $b_2$ such that $|\alpha''(x)|\leq \frac{c_2}{4x^3}$ for $x\in[b_2,\infty)$. Hence, as long as $M$ is sufficiently large, e.g, larger than $O(b_2)=O(1)$, ${\kappa}|\rho(\bar{x})-\rho(\bar{y})|\geq b_2$, and hence
\[
|{\kappa}^2 \alpha''({\kappa}|\rho(\bar{x})-\rho(\bar{y})|)|=O(\frac{1}{{\kappa}|\rho(\bar{x})-\rho(\bar{y})|^3})=O(\frac{N^2}{M^3}),
\]
since ${\kappa}=O(N)$ and $|\rho(\bar{x})-\rho(\bar{y})|=O(|\bar{x}-\bar{y}|)\geq O(\frac{M}{N})$. Note that \[|(x-x_0)(y-y_0)|=O(\frac{d_xd_y}{N^2})\leq O(\frac{M}{N^2}).\]
Hence,
\begin{eqnarray*}
&&|\omega(x-x_0)(y-y_0)|\\
&=&|\partial_{xy}\alpha({\kappa}|\rho(\bar{x})-\rho(\bar{y})|)(x-x_0)(y-y_0)|\\&\leq& O\left(\left(k^2 \alpha''({\kappa}|\rho(\bar{x})-\rho(\bar{y})|) + \frac{N{\kappa}}{M}\alpha'({\kappa}|\rho(\bar{x})-\rho(\bar{y})|)  \right)\frac{M}{N^2}\right)\\
&\leq & O(\frac{1}{M^2})+O(1)\\
&=&O(1),
\end{eqnarray*}
 which means that the submatrix of $D$ corresponding to the range in \eqref{eqn:range} is non-oscillatory and hence is rank $O(1)$ with a prefactor depending on a relative approximation error $\epsilon$ and independent of $N$. By \eqref{eqn:sp}, this conclusion is also true for the matrix $C$ and hence, we have completed the proof of Lemma \ref{lem:2}.

\end{proof}

An immediate result of Lemma \ref{lem:2} is that the off-diagonal block of $C$ is a hierarchically complementary low-rank matrix, e.g., see Figure \ref{fig:proof} (right) for an illustration. This conclusion is also true for the impedance matrix $A$.  Theoretically, the fast matvec of $A$ can be performed via hierarchically decomposition $A$ as in Figure \ref{fig:proof} (right) and applying the IDBF for each submatrix. This leads to a fast matvec with a complexity $O(N\log^2(N))$. Numerically, a more convenient way is to directly compress the whole off-diagonal block with IDBF, which also leads to a fast matvec with  a numerical complexity $O(N\log^2(N))$.

   }

\section{Numerical results}
\label{sec:results}
This section demonstrates the
efficiency and accuracy of the proposed preconditioner via its application to wide classes of open surfaces: a semicircle, a corrugated corner reflector, a spiral line, two parallel strips, a cup-shaped cavity, and {an array of open arcs}. In all examples, the surfaces are discretized with approximately 20 pulse basis functions per wavelength. The leaf sizes in the dyadic trees of each butterfly-compressed block are set to approximately $n_0=200$. The compression tolerance in linear scaling ID is set to $\epsilon$=1.0E-4, and the oversampling parameter $t$ in the linear scaling ID is set to $1$. For the aforementioned surfaces, the maximum butterfly ranks k among all blocks of the impedance matrices are respectively 7, 11, 10, 7, 14, 13, which are {almost} independent of the matrix sizes $N$. The matrix entries are scaled with a scalar such that the largest diagonal entry has unit magnitude. A transpose-free quasi-minimal residual (TFQMR) iterative solver is utilized with a convergence tolerance 1.0E-5. All experiments are performed on the Cori Haswell machine at NERSC, which is a Cray XC40 system and consists of 2388 dual-socket nodes with Intel Xeon E5-2698v3 processors running 16 cores per socket. The nodes are configured with 128\GB of DDR4 memory at 2133\MHz.          


 First, the accuracy of the proposed preconditioner is demonstrated by changing the matrix size $N$ from 5000 to 5,000,000. Let $x^{t}$ be a randomly generated $N\times1$ vector, the right hand side is generated as $b=Ax^{t}$. Note that $A$ is replaced by its H-IDBF compression $F$ when $N>10000$. Let $x^a$ denote the solution vector computed from the preconditioned linear system. The solution error is defined as $\epsilon^a=\left\lVert x^a-x^{t}\right\rVert_2/\left\lVert x^{t}\right\rVert_2$. From Table \ref{tab:error}, reasonably good accuracy has been observed for all classes of surfaces. However, a slight degradation in accuracy when $N$ increases is also observed.   

 Next, the computational efficiency is demonstrated by investigating the {computation time and memory} as matrix size $N$ increases. The construction time, iterative solution time, and overall memory are plotted in Figure \ref{fig:timemem} for all test surfaces. It can be easily validated that all three quantities scale as $O(N\log^2N)$ as predicted. It is worth mentioning that for the semicircle example, the proposed preconditioner permits rapid solution for $N=100$ million, which is very competitive to the state-of-the-art MLFMA-based iterative solvers. We also observe that the solution time (i.e., application and triangular solve of H-IDBF-LU in the iterative solver) dominates the overall computation time especially for surfaces that requires constant but a high number of iterations. 
 
 Finally, we compare the iteration counts of the proposed preconditioner and an iterative solver without any preconditioner (see Figure \ref{fig:iter}). {First, we investigate the dependence on $N$ for different surface shapes.} The iteration counts required by the iterative solver without preconditioner grow rapidly for all surfaces (dashed lines in Figure \ref{fig:iter}(a)). In contrast, for surfaces including the semicircle, spiral lines, and corrugated corners, the iteration counts using the proposed preconditioner stay as small constant (typically less than 30); for the other surfaces, the observed iteration counts scale at most as $O(\log^2N)$. It's worth mentioning that for complicated geometries such as highly resonant cavities or closed surfaces, the iteration counts can grow much faster. {To see this, we then investigate the dependence on the degree of resonance  or multiple reflections using an open arc with varying angles (Figure \ref{fig:iter}(b)), an Archimedean spiral with varying rotation angles (Figure \ref{fig:iter}(c)), (d) a 1-D array of open arcs with different element counts (Figure \ref{fig:iter}(d)). For the arc and spiral, the surface supports a higher degree of resonance as the angle increases (especially beyond $2\pi$), therefore the proposed preconditioner becomes less effective but still shows significant improvement compared to the iterative solver without any preconditioner. For the array of arcs, the scatterers support a dominant direction of propagation (along the direction of repetition) as the element increases, therefore the proposed preconditioner becomes very effective.}

\begin{figure}[ht!]
    \begin{center}
        \begin{tabular}{cc}
            \includegraphics[height=2in]{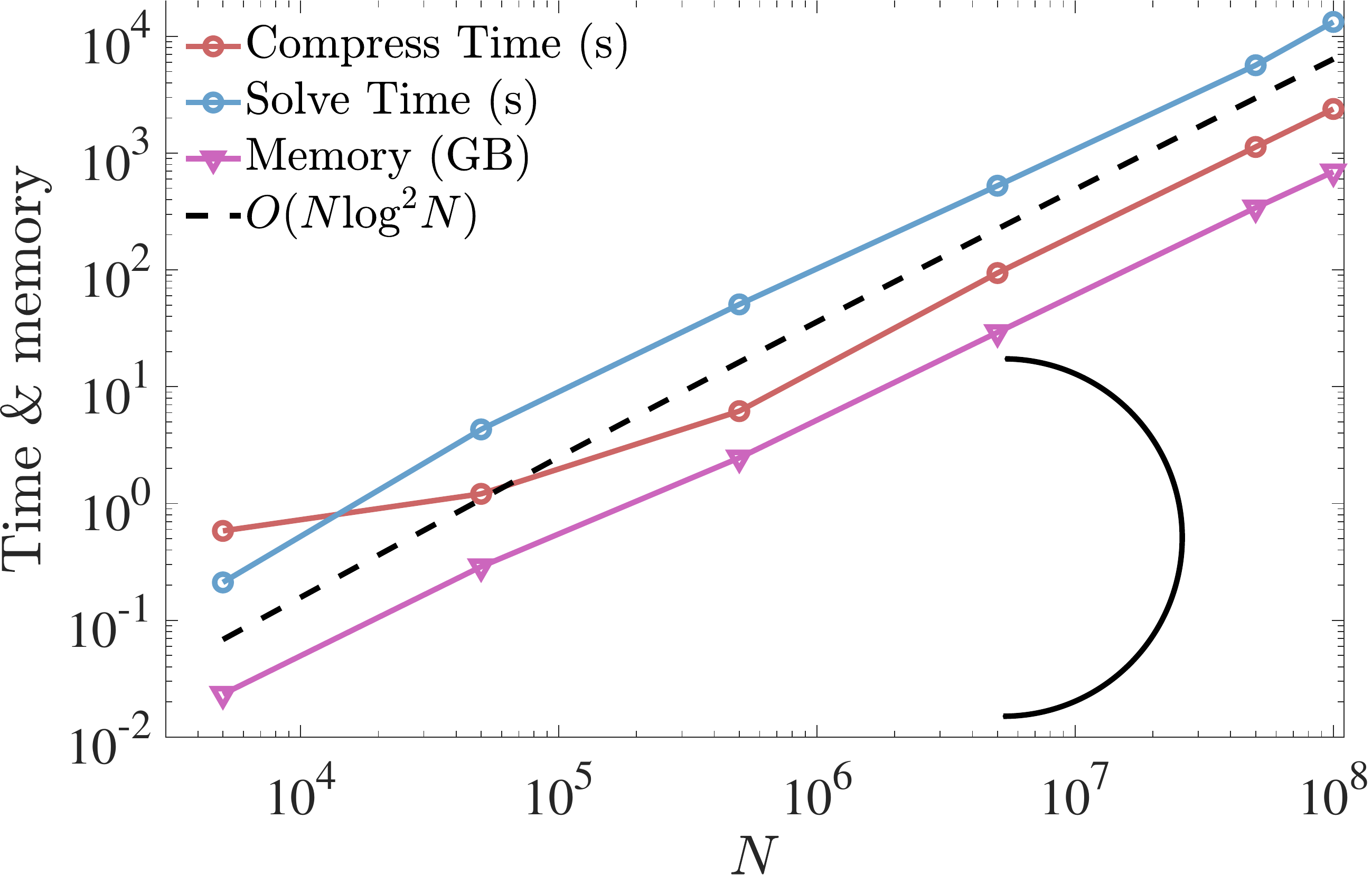} &  \includegraphics[height=2in]{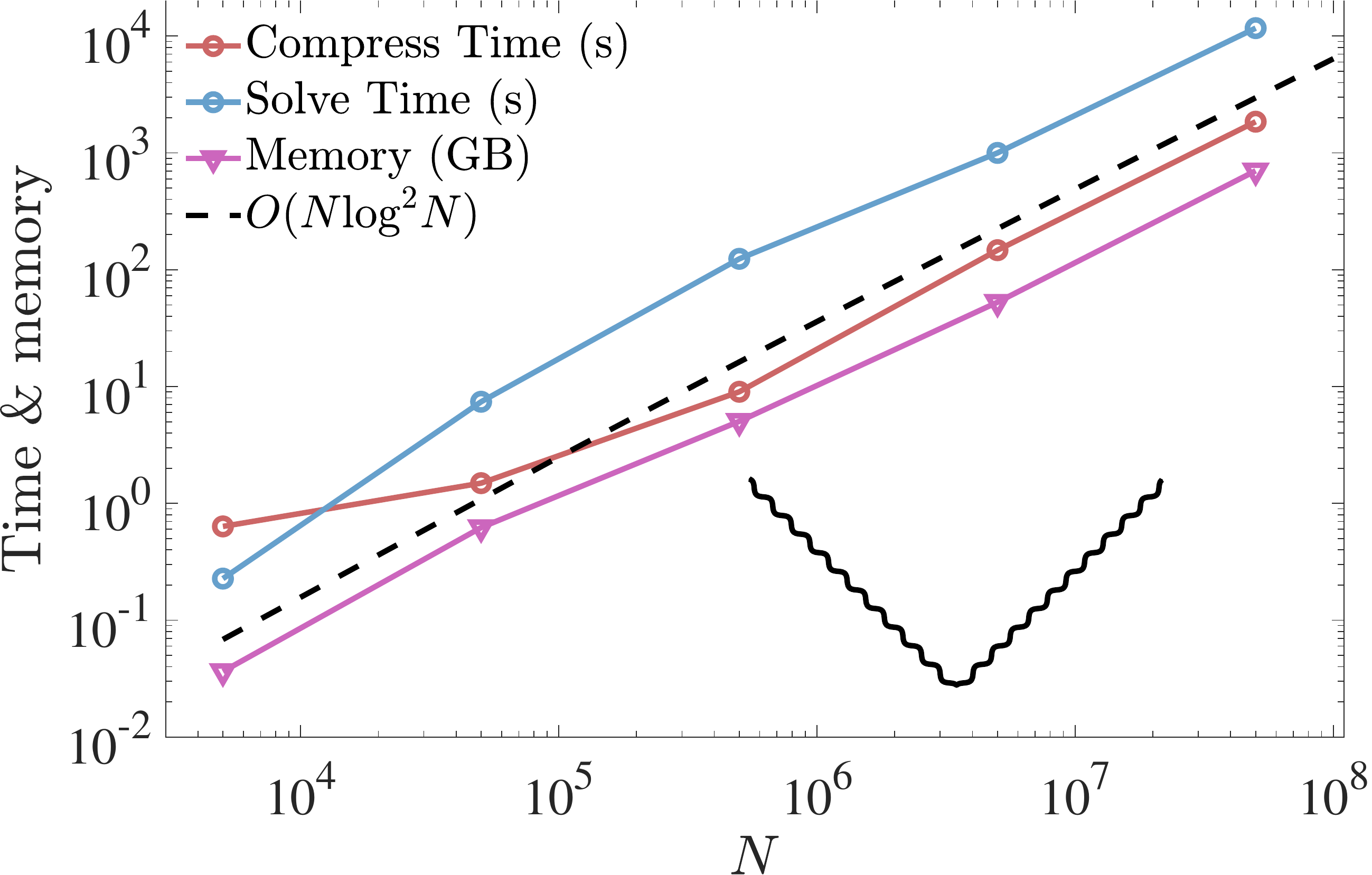} \\
            (a) & (b) \\
            \includegraphics[height=2in]{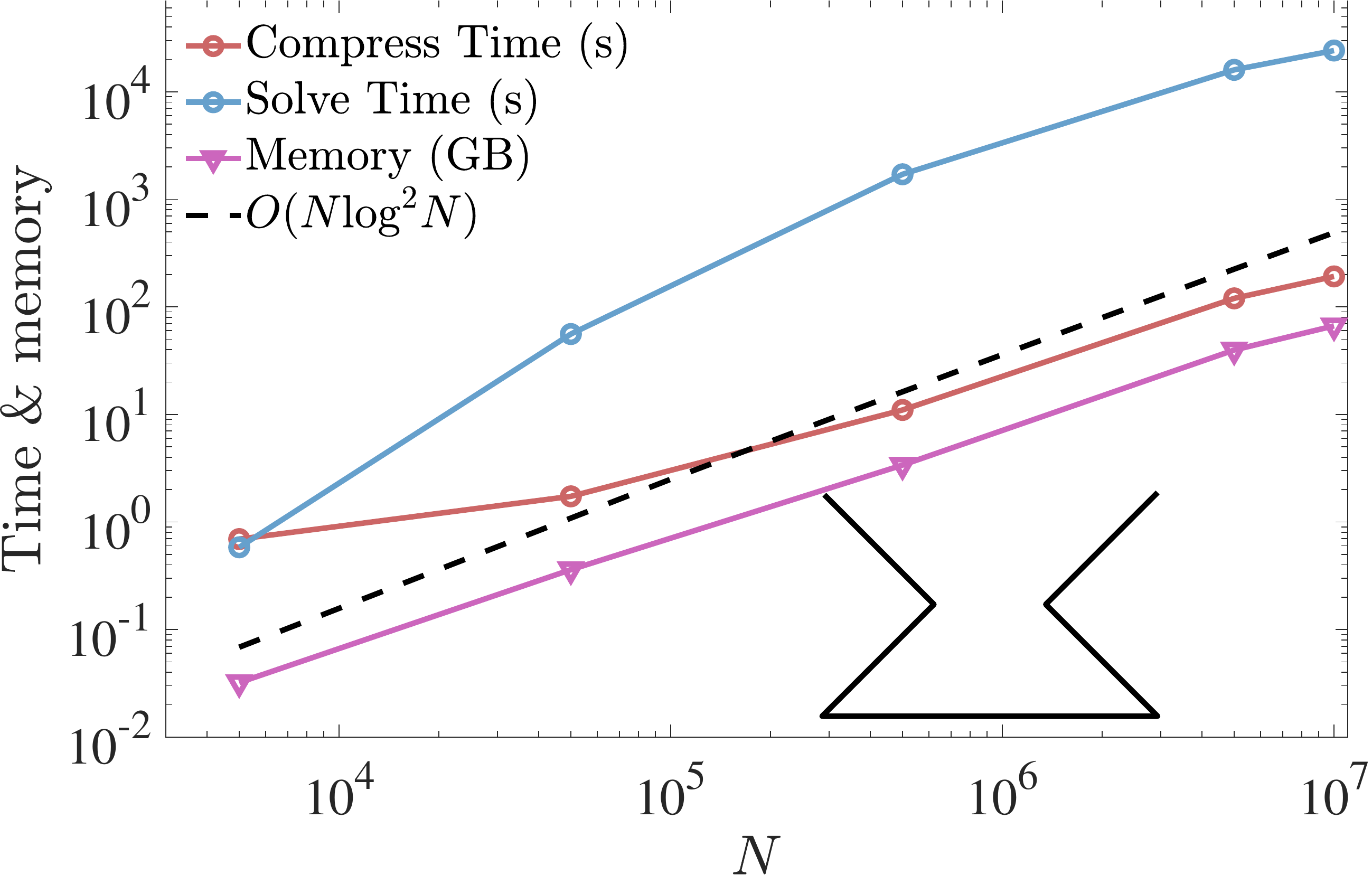} &  \includegraphics[height=2in]{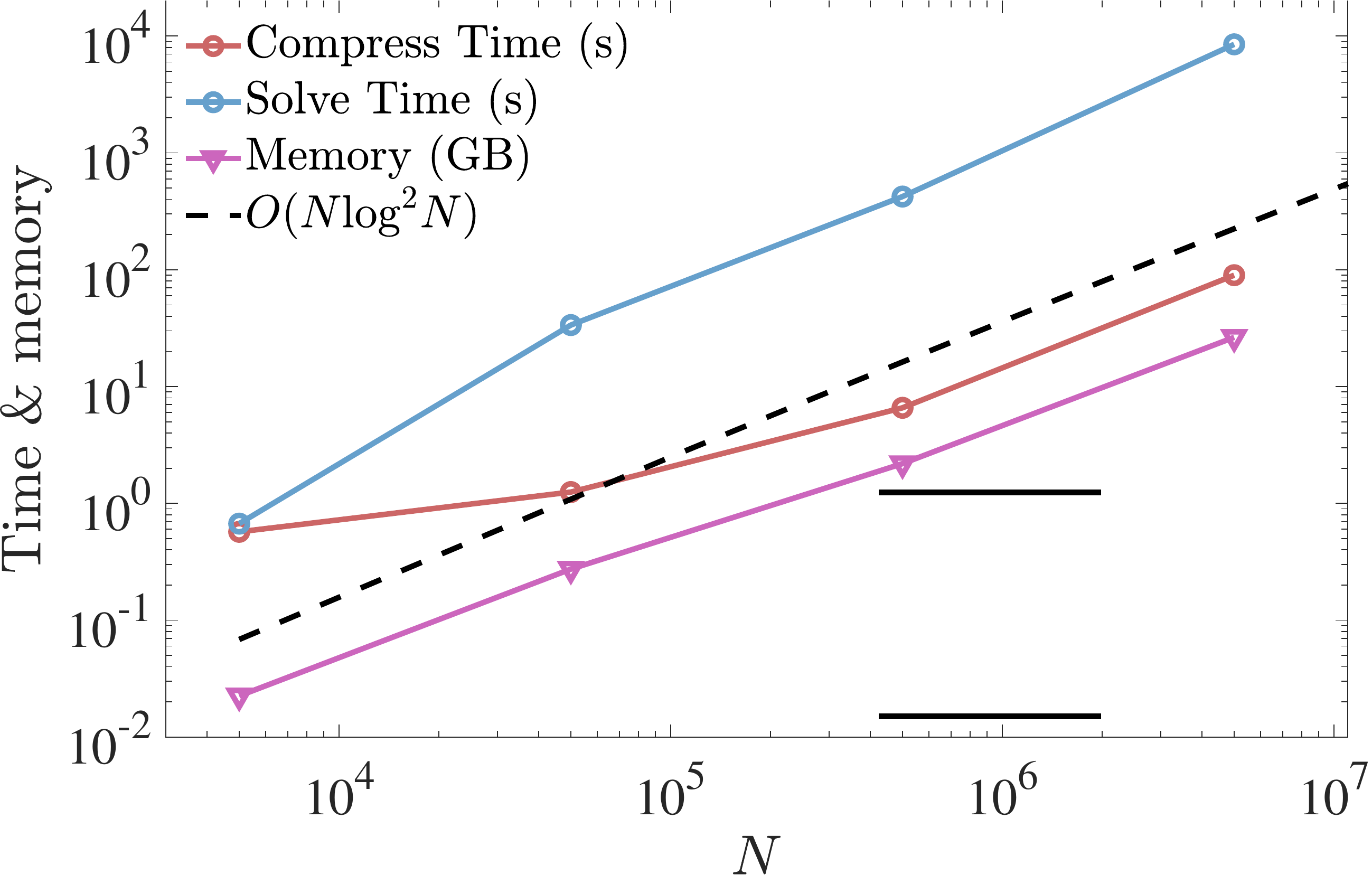} \\
            (c) & (d) \\    
            \includegraphics[height=2in]{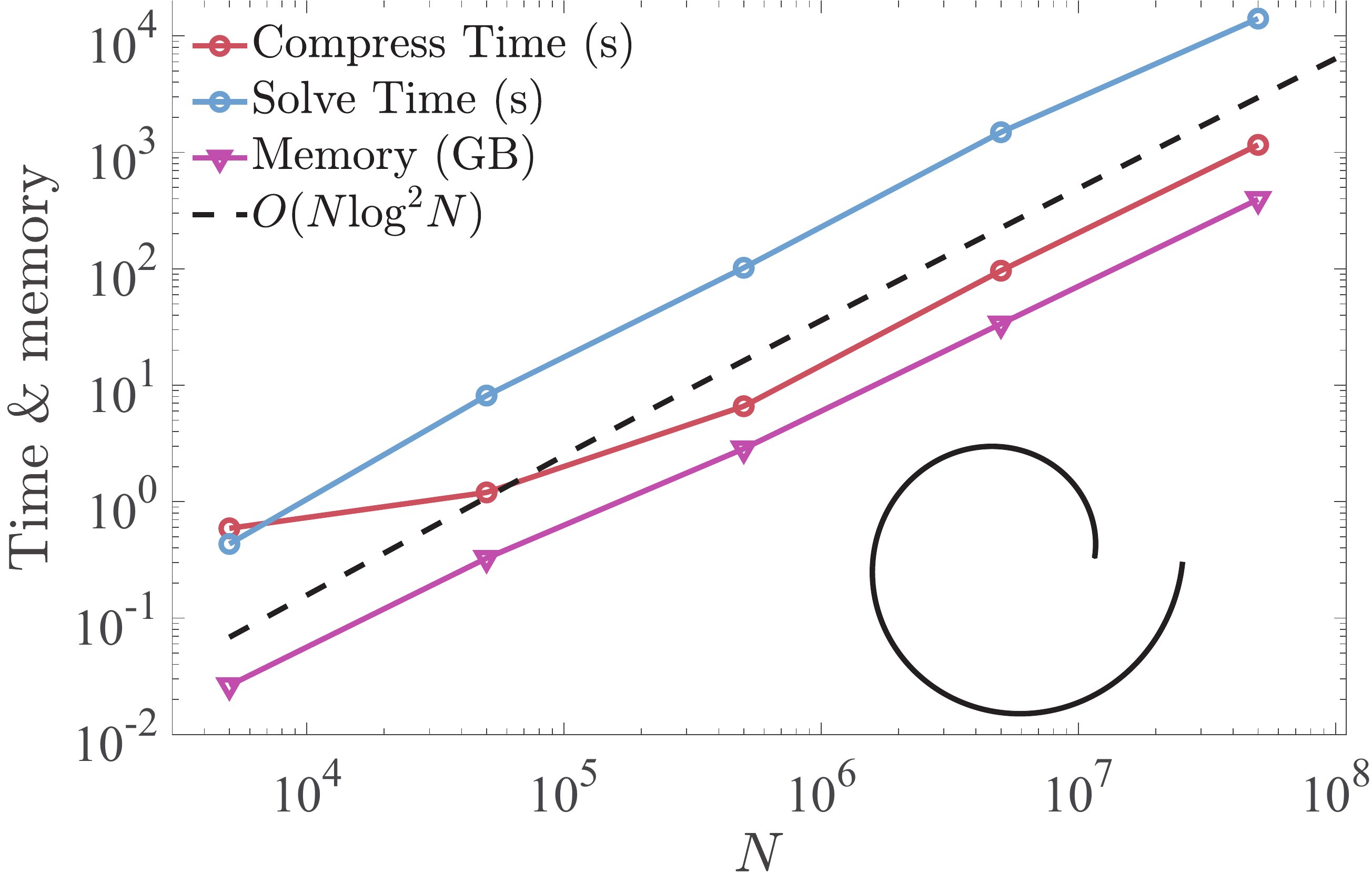} &  \includegraphics[height=2in]{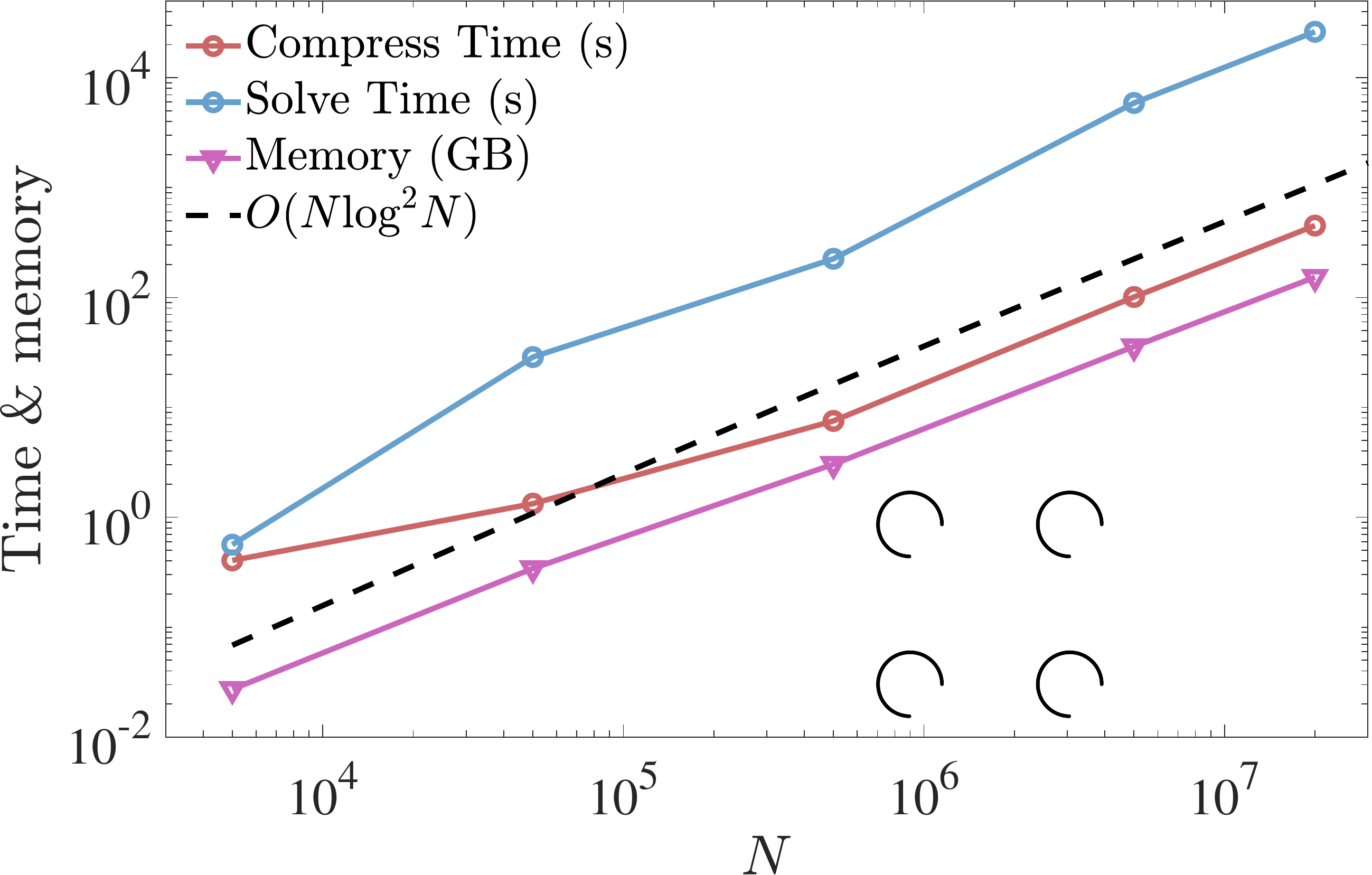} \\
            (e) & {(f)} 
        \end{tabular}
    \end{center}
    \caption{Computation time and memory of the proposed preconditioner for (a) a semicircle, (b) a corrugated corner, (c) a cup-shaped cavity, (d) two parallel strips, (e) a spiral line, and (f) {an array of 2$\times$2 arcs.} }
    \label{fig:timemem}
\end{figure}

\begin{figure}[ht!]
    \begin{center}
        \begin{tabular}{cc}
            \includegraphics[height=2in]{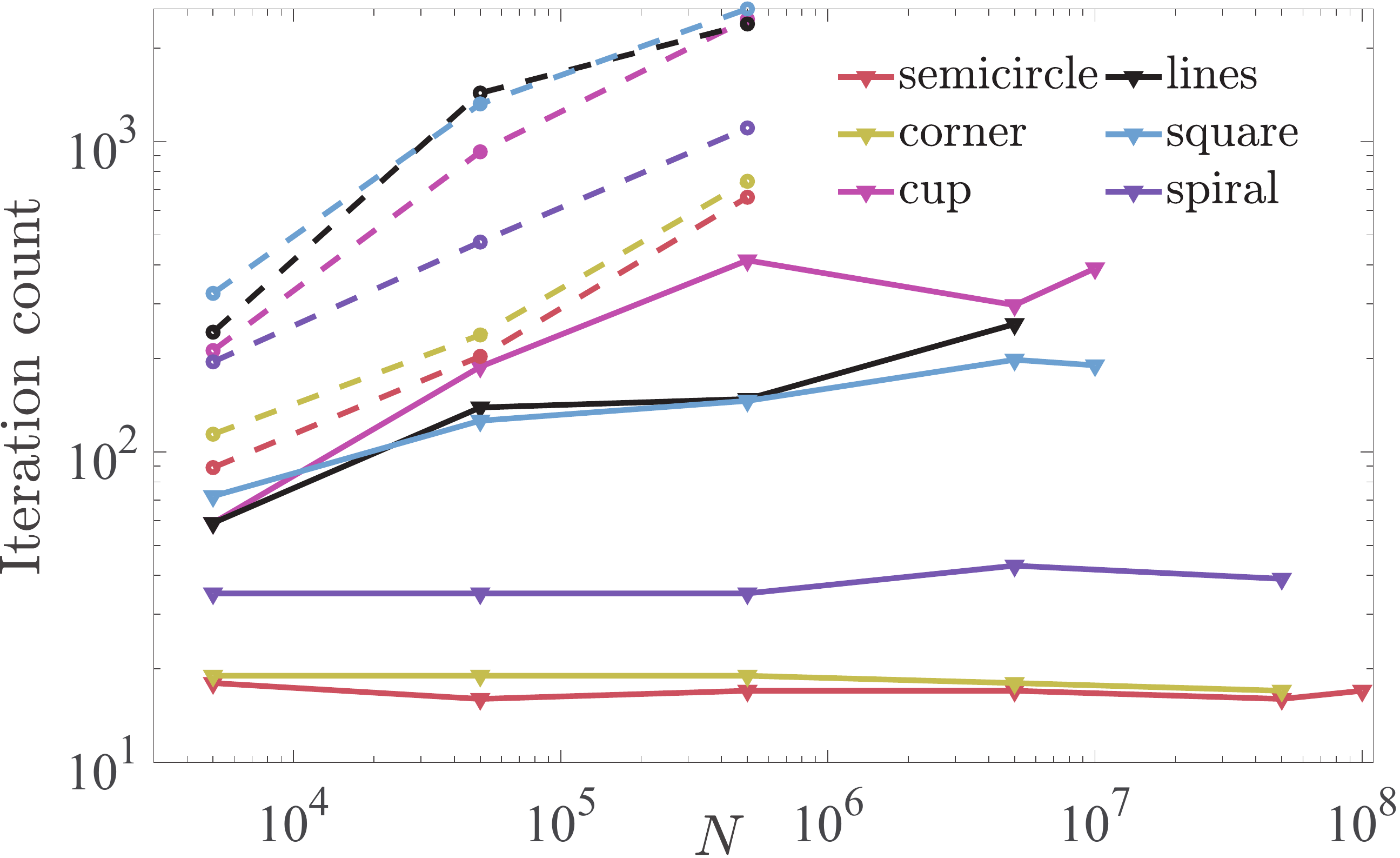} &  \includegraphics[height=2in]{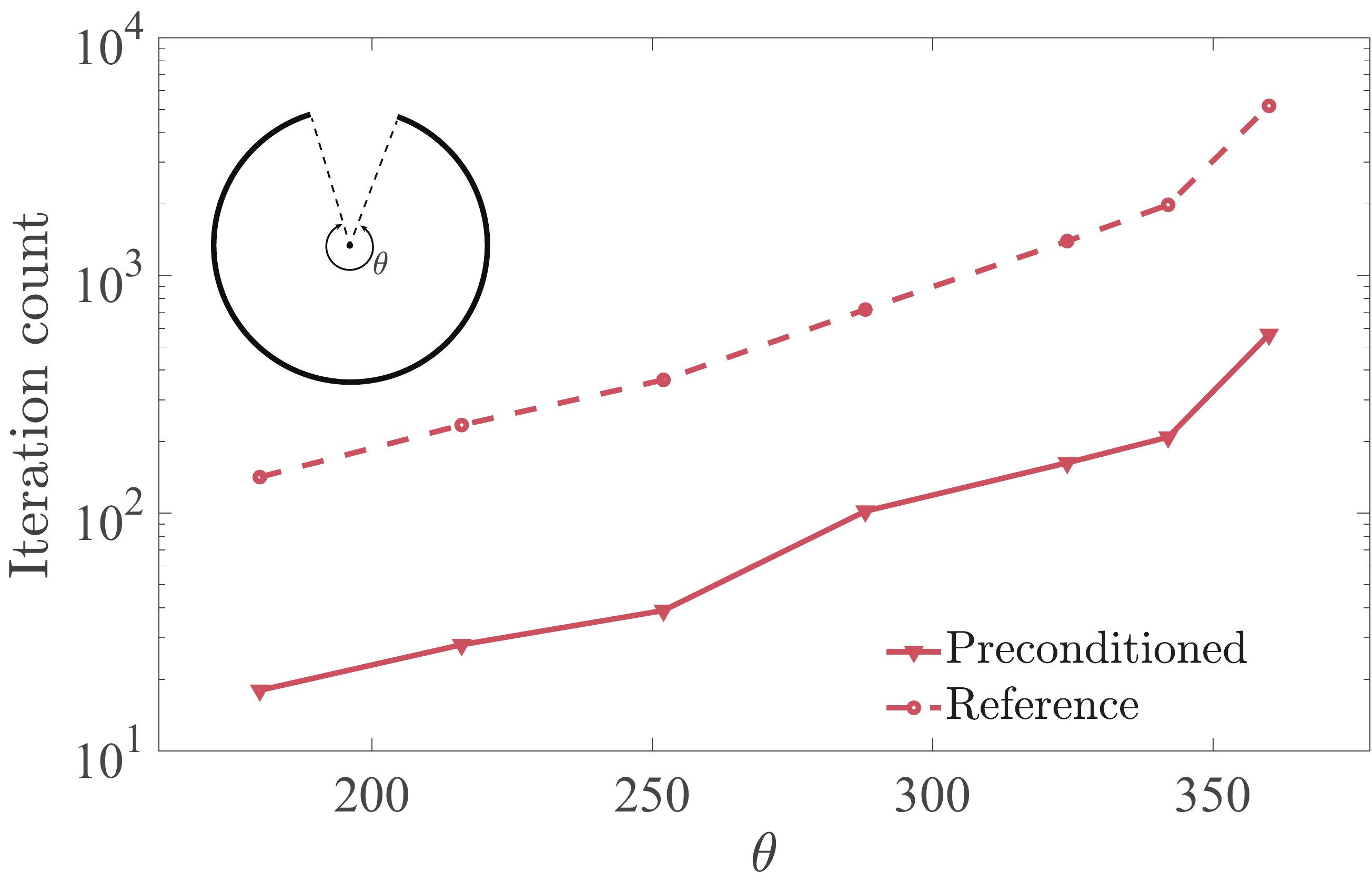} \\
            (a) & {(b)} \\
            \includegraphics[height=2in]{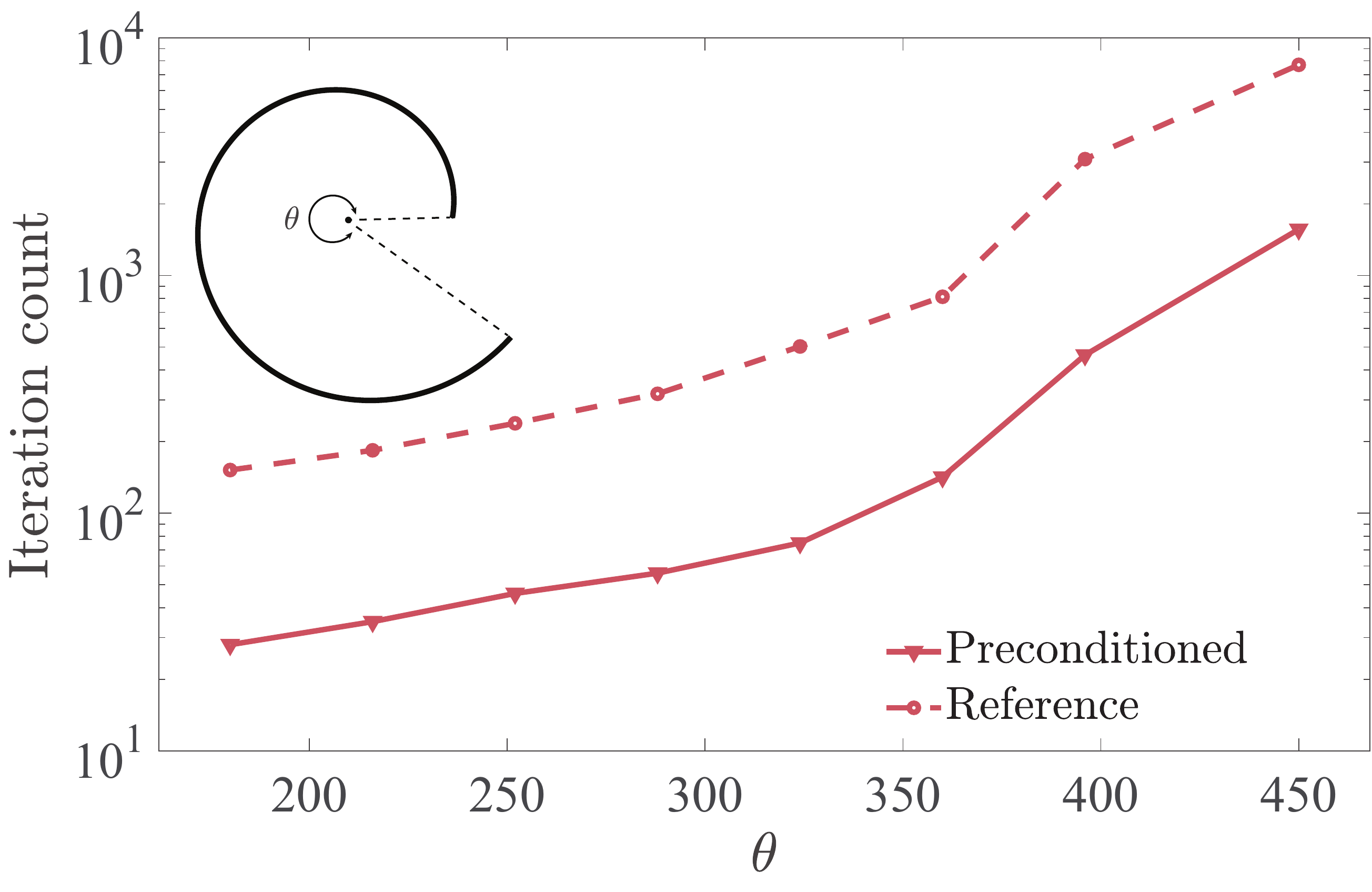} &  \includegraphics[height=2in]{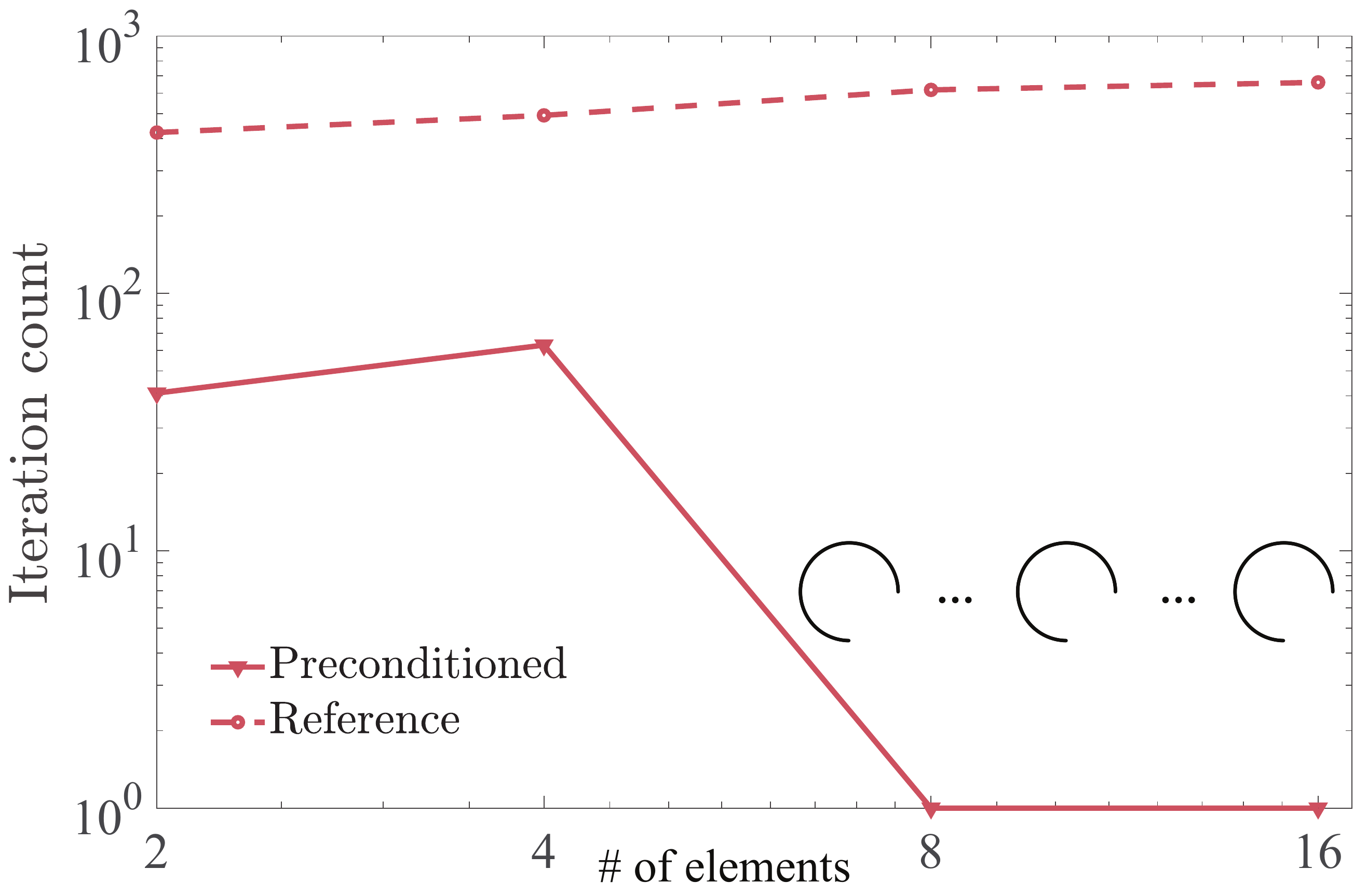} \\
            {(c)} & {(d)}         
        \end{tabular}
    \end{center}
    \caption{{Iteration counts with and without the proposed preconditioners for (a) curves with different shapes, (b) an open arc with varying angles, (c) an Archimedean spiral with varying rotation angles, (d) a 1-D array of open arcs with different element counts.}}
    \label{fig:iter}
\end{figure}

\begin{table}[ht!]
    \begin{center}
        \begin{tabular}{|c|c|c|c|c|c|c|}
            \hline
            shape & semicircle & corner & spiral & strips & square & cup \\
            \hline
            $N$=5E3 & 2.24E-06 & 9.51E-06 & 8.13E-06 & 7.12E-05 & 2.28E-05 & 1.60E-05 \\
            \hline            
            $N$=5E4 & 1.11E-05 & 9.84E-06 & 3.82E-05 & 6.45E-04 & 2.24E-04 & 1.84E-04 \\
            \hline
            $N$=5E5 & 5.86E-06 & 3.85E-06 & 4.01E-05 & 9.86E-04 & 2.38E-04 & 3.56E-04 \\
            \hline            
            $N$=5E6 & 1.10E-05 & 8.17E-06 & 1.37E-04 & 3.74E-04 & 2.33E-04 & 6.16E-04 \\
            \hline                            
        \end{tabular}
    \end{center}
    \caption{Measured solution error for different geometry shapes.}
    \label{tab:error}
\end{table}

\section{Conclusion and discussion}
\label{sec:conclusion}
This paper has introduced a simple and efficient algorithm, the hierarchical interpolative decomposition butterfly-LU factorization (H-IDBF-LU) preconditioner for solving two-dimensional electric-field integral equations (EFIEs) in electromagnetic scattering problems of perfect electrically conducting objects with open surfaces. H-IDBF-LU consists of two main parts: the first part applies the newly developed interpolative decomposition butterfly factorization (IDBF) to compress dense blocks of the discretized EFIE operator to expedite its application; the second part treats the lower and upper triangular part of the IDBF as an approximate LU factorization of the EFIE operator leading to an efficient preconditioner in iterative solvers.

Both the memory requirement and computational cost of the H-IDBF-LU solver scale as $O(N\log^2 N)$ in one iteration; the total number of iterations required for a reasonably good accuracy scales as $O(1)$ to $O(\log^2N)$ in all of our numerical tests. Our algorithm is simple to implement, automatically adapts to different structures with open surfaces, and is competitive with state-of-the-art MLFMA-based algorithms. A user-friendly MATLAB package, ButterflyLab (\url{https://github.com/ButterflyLab/ButterflyLab}), and a distributed parallel Fortran/C++ package, ButterflyPack (\url{https://github.com/liuyangzhuan/ButterflyPACK}), are freely available online.

The lower and upper triangular part of the EFIE operator can serve as an approximate LU factorization of the EFIE operator is an interesting observation and deserves much theoretical attention in the future. This idea could also be applied to three-dimensional EFIE's with open surfaces and we will explore this in future work.

{\bf Acknowledgments.} H. Yang was partially supported by Grant R-146-000-251-133 in the Department of
Mathematics at the National University of Singapore, by the Ministry of Education in Singapore under the
grant MOE2018-T2-2-147, and the start-up grant of the Department of Mathematics at Purdue University. Y. Liu thanks the support from the U.S. Department of Energy, Office of Science, Office of Advanced Scientific Computing Research, Scientific Discovery through Advanced Computing (SciDAC) program through the
FASTMath Institute under Contract No. DE-AC02-05CH11231 at Lawrence Berkeley National Laboratory. The authors also thank Dr. Xiaoye Sherry Li for useful discussions and suggestions on the paper.

\bibliographystyle{unsrt} 
\bibliography{ref}

\newpage
\appendix{IDBF}
\label{sec:IDBF}

\subsection{Overview}
\label{sub:ov}

Since the IDBF will be applied repeatedly in this paper, we briefly review the  $O(N\log N)$ IDBF algorithm proposed in \cite{IDBF} for a complementary low-rank matrix $K\in\mathbb{C}^{M\times N}$ with $M\approx N$ for the purpose of completeness. Let $X$ and $\Omega$ be the row and column index sets of $K$. Two trees $T_X$ and $T_\Omega$ of the same depth $L=O(\log N)$,
associated with $X$ and $\Omega$ respectively,
are constructed by dyadic partitioning with approximately equal node sizes with leaf node sizes no larger than $n_0$. Denote the root level of the tree as level $0$ and
the leaf one as level $L$. Such a matrix $K$ of size $M\times N$
is said to satisfy the {\bf complementary low-rank property} if for
any level $\ell$, any node $A$ in $T_X$ at level $\ell$, and any node
$B$ in $T_\Omega$ at level $L-\ell$, the submatrix $K_{A,B}$, obtained
by restricting $K$ to the rows indexed by the points in $A$ and the
columns indexed by the points in $B$, is numerically low-rank.
See Figure \ref{fig:submatrices} for an illustration of low-rank submatrices
in a complementary low-rank matrix of size $16n_0\times 16n_0$.

\begin{figure}[ht!]
  \begin{center}
    \begin{tabular}{ccccc}
      \includegraphics[height=1.1in]{figure/0.pdf}&
      \includegraphics[height=1.1in]{figure/1.pdf}&
      \includegraphics[height=1.1in]{figure/2.pdf}&
      \includegraphics[height=1.1in]{figure/3.pdf}&
      \includegraphics[height=1.1in]{figure/4.pdf}
    \end{tabular}
  \end{center}
  \caption{Hierarchical decomposition of the row and column indices
    of a complementary low-rank matrix of size
     $16n_0\times 16n_0$.  The trees $T_{X}$ ($T_{\Omega}$)
    has a root containing $16n_0$ column (row) indices and leaves
    containing $n_0$ row (column) indices.  The rectangles above
    indicate the low-rank submatrices that will be factorized in IDBF.}
\label{fig:submatrices}
\end{figure}

Given $K$, or equivalently an $O(1)$ algorithm to evaluate an arbitrary entry of $K$, IDBF aims at constructing a data-sparse representation of $K$ using the ID of low-rank submatrices in the complementary low-rank structure (see Figure \ref{fig:submatrices}) in the following form: 
\begin{equation}
\label{eqn:BFM}
K \approx U^{L}U^{L-1}\cdots U^{h} S^h V^{h}\cdots V^{L-1}V^{L},
\end{equation}
where the depth $L=\O(\log N)$ is assumed to be even without loss of generality,
$h=L/2$ is a middle level index, and all factors are sparse matrices with
$\O(N)$ nonzero entries. Storing and applying IDBF requires only $O(N\log N)$ memory and time. 

In what follows, uppercase letters will generally denote matrices, while the lowercase letters $c$, $p$, $q$, $r$, and $s$ denote ordered sets of indices. For a given index set $c$, its cardinality is written as $|c|$. Given a matrix $A$, $A_{pq}$, $A_{p,q}$, or $A(p,q)$ is the submatrix with rows and columns restricted to the index sets $p$ and $q$, respectively. We also use the notation $A_{:,q}$ to denote the submatrix with columns restricted to $q$. $s:t$ is an index set containing indices $\{s,s+1,s+2,\dots,t-1,t\}$. For the sake of simplicity, we assume that $N=2^Ln_0$, where $n_0=O(1)$ is the number of column or row indices in a leaf in the dyadic trees of row and column spaces, i.e., $T_{X}$ and $T_{\Omega}$, respectively. In practical numerical implementations, this assumption is not required.  

\subsection{Linear scaling Interpolative Decompositions}
\label{sub:ID}

Suppose $A$ has rank $k$, a rank revealing QR decomposition to $A_{s,:}$ gives
\begin{equation}
\label{eq:pivotedQR2}
A _{s,:}\Lambda = QR = Q[R_{1} \ R_{2}],
\end{equation}
 where $s$ is an index set {containing} $tk$ carefully selected rows of $A$ with $t$ as an oversampling parameter, $Q\in \bbC^{t{k\times {k}}}$ is an orthogonal matrix, $R\in \bbC^{{k}\times n}$ is upper trapezoidal, and $\Lambda \in \bbC^{n\times n}$ is a carefully chosen permutation matrix such that $R_{1}\in \bbC^{{k\times k}}$ is nonsingular. These $t{k}$ rows can be chosen from the Mock-Chebyshev
grids of the row indices as in \cite{MVBF,Mock1,Mock2}. 
 Let \begin{equation}
A_{s,q} = Q R_{1},\quad T = R_{1}^{-1}R_{2},
\end{equation}
then 
\begin{equation}
A_{:,p} \approx A_{:,q}T,
\end{equation}
which is equivalent to the traditional form of a column ID,
\begin{equation}
A \approx A_{:,q}[I \ T]\Lambda^*:=A_{:,q}V,
\end{equation}
where $q$ is the complementary set of $p$, $^*$ denotes the conjugate transpose of a matrix, and $V$ is the {\it column interpolation matrix}.. It can be easily shown that all the steps above require only $O({k}^2n)$ operations and $O({k}n)$ memory.

In practice, the true rank of $A$ is not available i.e., ${k}$ is unknown. As is in standard randomized algorithms, we could choose to fix a test rank ${k}\leq n$ or fix the approximation accuracy $\epsilon$ and find a numerical rank ${{k}_\epsilon}$ such that
\begin{equation}
\label{eq:req}
\|A-A_{:,q}V\|_2\leq O(\epsilon)
\end{equation}
with $T\in \mathbb{C}^{{{k}_\epsilon\times (n-{k}_\epsilon)}}$ and $V\in \mathbb{C}^{{{k}_\epsilon}\times n}$. We refer {to} this linear scaling column ID with an accuracy tolerance $\epsilon$ and a rank parameter ${k}$ as {\it $(\epsilon,{k})$-cID} ({\it $(\epsilon,{k})$-cID} for short).  For convenience, we will drop the term $(\epsilon,{k})$ when it is not necessary to specify it.

 Similarly, a row ID for the matrix $A\in \bbC^{m \times n}$  
\begin{equation}
A \approx \Lambda [I \ T]^* A_{q,:}:=UA_{q,:}
\end{equation}
can be attained by performing {\it cID} on $A^*$ with $O({k}^2m)$ operations and $O({k}m)$ memory. We refer {to} this linear scaling row ID as {\it ${(\epsilon,{k})}$-rID} and $U$ as the {\it row interpolation matrix}.

\subsection{Leaf-root complementary skeletonization (LRCS)}
\label{sub:LRCS}

Assume that at the leaf level of the row (and column) dyadic trees, the row index set $r$ (and the column index set $c$) of $A$ are divided into leaves $\{r_i\}_{1\leq i\leq m}$ (and $\{c_i\}_{1\leq i\leq m}$) in the following way:
\begin{equation}
\label{eq:partrc}
r = [r_{1},r_{2},\cdots,r_{m}] \qquad (\text{and } c = [c_{1},c_{2},\cdots,c_{m}]),
\end{equation}
with $|r_i|=n_0$ (and $|c_i|=n_0$) for all $1\leq i\leq m$, 
where $m = 2^{L}=\frac{N}{n_0}$, $L = \log_2 N - \log_2 n_0$, and $L+1$ is the depth of the dyadic trees $T_{X}$ (and $T_{\Omega}$). $\rID$ is applied to each $A_{r_{i},:}$ to compute the row interpolation matrix and denote it as $U_{i}$;  the associated skeleton indices is denoted as $\hat{r}_{i}\subset r_{i}$. Let $\hat{r} = [\hat{r}_{1},\hat{r}_{2},\cdots,\hat{r}_{m}]$, 
then $A_{\hat{r},:}$ is the important skeleton of $A$ and we can arrange all the small ID factors into a larger matrix factorization as follows:
\[
A\approx \begin{pmatrix}
U_{1} & & & \\
& U_{2} & & \\
& & \ddots & \\
& & & U_{m} 
\end{pmatrix}
\begin{pmatrix}
A_{\hat{r}_{1},c_{1}} & A_{\hat{r}_{1},c_{2}} & \hdots & A_{\hat{r}_{1},c_{m}} \\
A_{\hat{r}_{2},c_{1}} & A_{\hat{r}_{2},c_{2}} & \hdots & A_{\hat{r}_{2},c_{m}} \\
\vdots & \vdots & \ddots & \vdots \\
A_{\hat{r}_{m},c_{1}} & A_{\hat{r}_{m},c_{2}} & \hdots & A_{\hat{r}_{m},c_{m}}  
\end{pmatrix}
:=UM.
\]

Similarly, $\cID$ is applied to each $A_{\hat{r},c_{j}}$ to obtain the column interpolation matrix $V_{j}$ and the skeleton indices $\hat{c}_{j}\subset c_{j}$. Then finally we form the LRCS of $A$ as

\begin{equation}
\label{facA}
A\approx \begin{pmatrix}
U_{1} & & & \\
& U_{2} & & \\
& & \ddots & \\
& & & U_{m} 
\end{pmatrix}
\begin{pmatrix}
A_{\hat{r}_{1},\hat{c}_{1}} & A_{\hat{r}_{1},\hat{c}_{2}} & \hdots & A_{\hat{r}_{1},\hat{c}_{m}} \\
A_{\hat{r}_{2},\hat{c}_{1}} & A_{\hat{r}_{2},\hat{c}_{2}} & \hdots & A_{\hat{r}_{2},\hat{c}_{m}} \\
\vdots & \vdots & \ddots & \vdots \\
A_{\hat{r}_{m},\hat{c}_{1}} & A_{\hat{r}_{m},\hat{c}_{2}} & \hdots & A_{\hat{r}_{m},\hat{c}_{m}}  
\end{pmatrix}
\begin{pmatrix}
V_{1} & & & \\
& V_{2} & & \\
& & \ddots & \\
& & & V_{m} 
\end{pmatrix}:=USV.
\end{equation}
For a concrete example, Figure \ref{fig:rlfac} visualizes the non-zero pattern of the LRCS  in \eqref{facA}.

It is noteworthy that we only generate and store the skeleton of row and column index sets corresponding to $M$ and $S$, instead of computing $M$ and $S$ explicitly. Hence, it only takes $O(\frac{{k}^3}{n_0} N)$ operations and $O(\frac{{k}^2}{n_0}N)$ memory to generate and store the factorization in \eqref{facA}, since there are $2m=\frac{2N}{n_0}$ IDs in total.

\begin{figure}[htp]
\begin{minipage}{\textwidth}
\centering
\resizebox{3.5cm}{!}{
\begin{tikzpicture}[baseline=-0.5ex]
      \tikzset{every left delimiter/.style={xshift=-1ex},every right delimiter/.style={xshift=1ex}}
      \matrix (mat) [matrix of math nodes, left delimiter=(, right delimiter=)] {
\draw[fill=gray] (32,0) rectangle (0,32);
\\
      };
\end{tikzpicture}
}
$\approx$
\resizebox{1.85cm}{!}{
\begin{tikzpicture}[baseline=-0.5ex]
      \tikzset{every left delimiter/.style={xshift=-1ex},every right delimiter/.style={xshift=1ex}}
      \matrix (mat) [matrix of math nodes, left delimiter=(, right delimiter=)] {
      \draw;
\foreach \i in {0,1,...,15}{
   \draw[fill=gray]  (\i,32-\i-\i) rectangle (\i+1,32-\i-\i-2);
}\\
      };
\end{tikzpicture}
}
\resizebox{1.85cm}{!}{
\begin{tikzpicture}[baseline=-0.5ex]
      \tikzset{every left delimiter/.style={xshift=-1ex},every right delimiter/.style={xshift=1ex}}
      \matrix (mat) [matrix of math nodes, left delimiter=(, right delimiter=)] {
      \draw;
\foreach \i in {0,1,...,15}{
\foreach \j in {0,1,...,15}{
      \draw[fill=gray]  (\i,16-\j) rectangle (\i+1,16-\j-1);
}
}\\
      };
\end{tikzpicture}
}
\resizebox{3.5cm}{!}{
\begin{tikzpicture}[baseline=-0.5ex]
      \tikzset{every left delimiter/.style={xshift=-1ex},every right delimiter/.style={xshift=1ex}}
      \matrix (mat) [matrix of math nodes, left delimiter=(, right delimiter=)] {
      \draw;
\foreach \i in {0,1,...,15}{
   \draw[fill=gray]  (32-\i-\i,\i) rectangle (32-\i-\i-2,\i+1);
}\\
      };
\end{tikzpicture}
}\\
\end{minipage}
\caption{An example of the LRCS in \eqref{facA} of the complementary low-rank matrix $A$. Non-zero submatrices in \eqref{facA} are shown in gray areas.}
\label{fig:rlfac}
\end{figure}
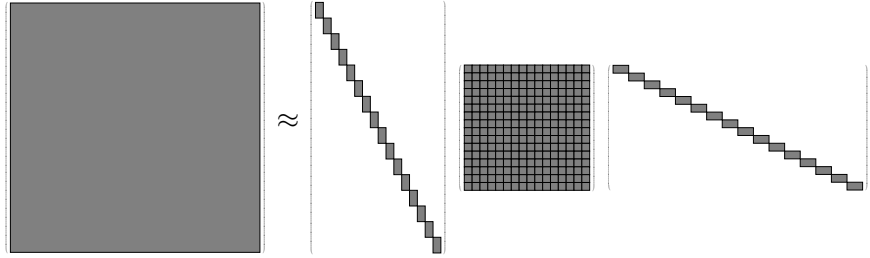

\subsection{Matrix splitting with complementary skeletonization (MSCS)}
\label{sub:MSCS}

 A complementary low-rank matrix $A$ (with row and column dyadic trees $T_{X}$ and $T_{\Omega}$ of depth $L+1$ and with $m=2^L$ leaves) can be split into a $2\times 2$ block matrix, $
A = 
\begin{pmatrix}
A_{11} & A_{12} \\
A_{21} & A_{22}
\end{pmatrix}$,  
according to the nodes of the second level of the dyadic trees $T_X$ and $T_\Omega$ (right next to the root). As a result, each $A_{ij}$ is also a complementary low-rank matrix with its row and column dyadic trees of $L-1$ levels. For example, $A$ in Figure \ref{fig:submatrices} is a five-level complementary low-rank matrix and $A_{11}$ is a three-level complementary low-rank matrix (see the highlighted submatrices in Figure \ref{fig:submatrices}).

Suppose {$A_{ij} \approx U_{ij}S_{ij}V_{ij}$}, for $i,j = 1,2$, is the LRCS of $A_{ij}$.
Then $A \approx USV$, where
\begin{equation}
\label{eq:expressUSV}
\begin{split}
U = 
\begin{pmatrix}
U_{11} & & U_{12} & \\
& U_{21} & & U_{22}
\end{pmatrix},\quad
S = \begin{pmatrix}
S_{11} & & & \\
& & S_{21} & \\
& S_{12} & & \\
& & & S_{22} 
\end{pmatrix},\quad
V = 
\begin{pmatrix}
V_{11} & \\
& V_{12} \\
V_{21} & \\
& V_{22}
\end{pmatrix}.
\end{split}
\end{equation}
The factorization in \eqref{eq:expressUSV} is referred as the MSCS. Recall that the middle factor $S$ is not explicitly computed, resulting in a linear scaling algorithm for forming \eqref{eq:expressUSV}. Figure \ref{fig:exlfac} visualizes the MSCS of a complementary low-rank matrix $A$.

\begin{figure}[htp]
\begin{minipage}{\textwidth}
\centering
\resizebox{3.5cm}{!}{
\begin{tikzpicture}[baseline=-0.5ex]
      \tikzset{every left delimiter/.style={xshift=-1ex},every right delimiter/.style={xshift=1ex}}
      \matrix (mat) [matrix of math nodes, left delimiter=(, right delimiter=)] {
\draw[fill=gray] (32,0) rectangle (0,32);
\\
      };
\end{tikzpicture}
}
$\approx$
\resizebox{3.5cm}{!}{
\begin{tikzpicture}[baseline=-0.5ex]
      \tikzset{every left delimiter/.style={xshift=-1ex},every right delimiter/.style={xshift=1ex}}
      \matrix (mat) [matrix of math nodes, left delimiter=(, right delimiter=)] {
      \draw;
\foreach \j in {0,1,...,15}{
    \draw[fill=gray] (\j,32-\j-\j) rectangle (\j+1,32-\j-\j-2);
    \draw[fill=gray] (16+\j,32-\j-\j) rectangle (16+\j+1,32-\j-\j-2);
}
\\
      };
\end{tikzpicture}
}
\resizebox{3.5cm}{!}{
\begin{tikzpicture}[baseline=-0.5ex]
      \tikzset{every left delimiter/.style={xshift=-1ex},every right delimiter/.style={xshift=1ex}}
      \matrix (mat) [matrix of math nodes, left delimiter=(, right delimiter=)] {
      \draw;
      \draw[fill=gray] (0,32) rectangle (8,24);
      \draw[fill=gray] (16,16) rectangle (24,24); 
      \draw[fill=gray] (8,8) rectangle (16,16);  
      \draw[fill=gray] (24,0) rectangle (32,8); 
\\
      };
\end{tikzpicture}
}
\resizebox{3.5cm}{!}{
\begin{tikzpicture}[baseline=-0.5ex]
      \tikzset{every left delimiter/.style={xshift=-1ex},every right delimiter/.style={xshift=1ex}}
      \matrix (mat) [matrix of math nodes, left delimiter=(, right delimiter=)] {
      \draw;
\foreach \j in {0,1,...,15}{
    \draw[fill=gray] (\j+\j,32-\j) rectangle (\j+\j+2,32-\j-1);
    \draw[fill=gray] (\j+\j,32-\j-16) rectangle (\j+\j+2,32-\j-17);
}
\\
      };
\end{tikzpicture}
}\\
\end{minipage}
\caption{The visualization of a MSCS of a complementary low-rank matrix $A \approx USV$. Non-zero blocks in \eqref{eq:expressUSV} are shown in gray areas.}
\label{fig:exlfac}
\end{figure}
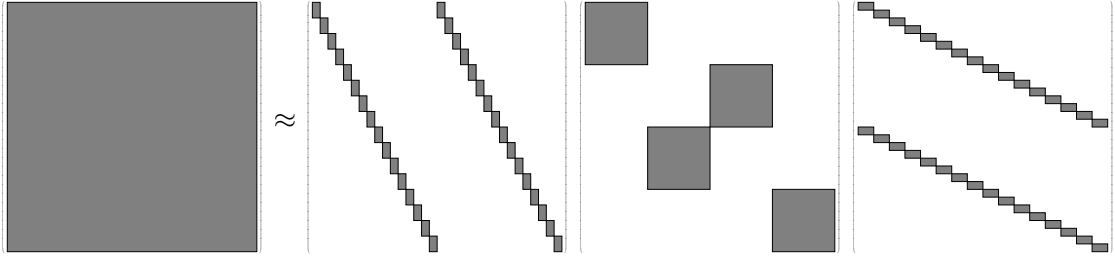

\subsection{Recursive MSCS}
\label{sub:RMSCS}

Now we apply MSCS recursively to get the full
IDBF. Suppose we have the first level of MSCS as
$A \approx U^{L}S^{L}V^{L}$ with
\begin{equation}
\label{eqn:USV2}
\begin{split}
U^{L} = 
\begin{pmatrix}
U^{L}_{11} & & U^{L}_{12} & \\
& U^{L}_{21} & & U^{L}_{22}
\end{pmatrix},\quad
S^{L} = \begin{pmatrix}
S^{L}_{11} & & & \\
& & S^{L}_{21} & \\
& S^{L}_{12} & & \\
& & & S^{L}_{22} 
\end{pmatrix},\quad
V^{L} = 
\begin{pmatrix}
V^{L}_{11} & \\
& V^{L}_{12} \\
V^{L}_{21} & \\
& V^{L}_{22}
\end{pmatrix}.
\end{split}
\end{equation}

By construction, we know $S^{L}_{ij}$ are complementary low-rank. Next, apply MSCS to each $S^L_{ij}$:
\begin{equation}
\label{eq:facSL1234}
{S^{L}_{ij} \approx U^{L-1}_{ij}S^{L-1}_{ij}V^{L-1}_{ij},}
\end{equation}
where 
\begin{equation}
\label{eq:expressUSVL-1}
\begin{split}
U^{L-1}_{ij} &= 
\begin{pmatrix}
(U^{L-1}_{ij})_{11} & & (U^{L-1}_{ij})_{12} & \\
& (U^{L-1}_{ij})_{21} & & (U^{L-1}_{ij})_{22}
\end{pmatrix},\\
S^{L-1}_{ij} &= 
\begin{pmatrix}
(S^{L-1}_{ij})_{11} & & & \\
& & (S^{L-1}_{ij})_{21} & \\
& (S^{L-1}_{ij})_{12} & & \\
& & & (S^{L-1}_{ij})_{22}
\end{pmatrix},\\
V^{L-1}_{ij} &= 
\begin{pmatrix}
(V^{L-1}_{ij})_{11} & \\
& (V^{L-1}_{ij})_{12} \\
(V^{L-1}_{ij})_{21} & \\
& (V^{L-1}_{ij})_{22}
\end{pmatrix}.
\end{split}
\end{equation}
 
Finally, organizing \eqref{eq:facSL1234} forms $S^{L} \approx U^{L-1}S^{L-1}V^{L-1}$ (see its visualization in Figure \ref{fig:exsfac}), where 
\begin{equation}
\label{eq:expressUSV_L-1}
U^{L-1} = 
\begin{pmatrix}
U^{L-1}_{11} & & & \\
& U^{L-1}_{21} & & \\
& & U^{L-1}_{12} & \\
& & & U^{L-1}_{11}
\end{pmatrix},\quad
S^{L-1} = 
\begin{pmatrix}
S^{L-1}_{11} & & & \\
& & S^{L-1}_{21} & \\
& S^{L-1}_{12} & & \\
& & & S^{L-1}_{22}
\end{pmatrix},
\end{equation}

\begin{equation*}
V^{L-1} = 
\begin{pmatrix}
V^{L-1}_{11} & & & \\
& V^{L-1}_{12} & & \\
& & V^{L-1}_{21} & \\
& & & S^{L-1}_{22}
\end{pmatrix},
\end{equation*}
leading to a second level factorization of $A$ as $A\approx U^L U^{L-1}S^{L-1}V^{L-1}V^L$.

\begin{figure}[htp]
\begin{minipage}{\textwidth}
\centering
\resizebox{3.5cm}{!}{
\begin{tikzpicture}[baseline=-0.5ex]
      \tikzset{every left delimiter/.style={xshift=-1ex},every right delimiter/.style={xshift=1ex}}
      \matrix (mat) [matrix of math nodes, left delimiter=(, right delimiter=)] {
      \draw;
      \draw[fill=gray] (0,32) rectangle (8,24);
      \draw[fill=gray] (16,16) rectangle (24,24); 
      \draw[fill=gray] (8,8) rectangle (16,16);  
      \draw[fill=gray] (24,0) rectangle (32,8); 
\\
      };
\end{tikzpicture}
}
$\approx$
\resizebox{3.5cm}{!}{
\begin{tikzpicture}[baseline=-0.5ex]
      \tikzset{every left delimiter/.style={xshift=-1ex},every right delimiter/.style={xshift=1ex}}
      \matrix (mat) [matrix of math nodes, left delimiter=(, right delimiter=)] {
      \draw;
\foreach \i in {0,8,...,24}{
\foreach \j in {0,1,...,3}{
    \draw[fill=lightgray] (\i+\j,32-\i-\j-\j) rectangle (\i+\j+1,32-\i-\j-\j-2);
    \draw[fill=lightgray] (\i+\j+4,32-\i-\j-\j) rectangle (\i+\j+5,32-\i-\j-\j-2);
}
}\\
      };
\end{tikzpicture}
}
\resizebox{3.5cm}{!}{
\begin{tikzpicture}[baseline=-0.5ex]
      \tikzset{every left delimiter/.style={xshift=-1ex},every right delimiter/.style={xshift=1ex}}
      \matrix (mat) [matrix of math nodes, left delimiter=(, right delimiter=)] {
      \draw;
\draw[fill=lightgray] (0,32) rectangle (2,30);
\draw[fill=lightgray] (4,30) rectangle (6,28);
\draw[fill=lightgray] (2,28) rectangle (4,26);
\draw[fill=lightgray] (6,26) rectangle (8,24);
\draw[fill=lightgray] (16,24) rectangle (18,22);
\draw[fill=lightgray] (20,22) rectangle (22,20);
\draw[fill=lightgray] (18,20) rectangle (20,18);
\draw[fill=lightgray] (22,18) rectangle (24,16);
\draw[fill=lightgray] (8,16) rectangle (10,14);
\draw[fill=lightgray] (12,14) rectangle (14,12);
\draw[fill=lightgray] (10,12) rectangle (12,10);
\draw[fill=lightgray] (14,10) rectangle (16,8);
\draw[fill=lightgray] (24,8) rectangle (26,6);
\draw[fill=lightgray] (28,6) rectangle (30,4);
\draw[fill=lightgray] (26,4) rectangle (28,2);
\draw[fill=lightgray] (30,2) rectangle (32,0);
\\
      };
\end{tikzpicture}
}
\resizebox{3.5cm}{!}{
\begin{tikzpicture}[baseline=-0.5ex]
      \tikzset{every left delimiter/.style={xshift=-1ex},every right delimiter/.style={xshift=1ex}}
      \matrix (mat) [matrix of math nodes, left delimiter=(, right delimiter=)] {
      \draw;
\foreach \i in {0,8,...,24}{
\foreach \j in {0,1,...,3}{
    \draw[fill=lightgray] (\i+\j+\j,32-\i-\j) rectangle (\i+\j+\j+2,32-\i-\j-1);
    \draw[fill=lightgray] (\i+\j+\j,32-\i-\j-4) rectangle (\i+\j+\j+2,32-\i-\j-5);
}
}\\
      };
\end{tikzpicture}
}\\
\end{minipage}
\caption{The visualization of the recursive MSCS of $S^L=U^{L-1}S^{L-1}V^{L-1}$.}
\label{fig:exsfac}
\end{figure}
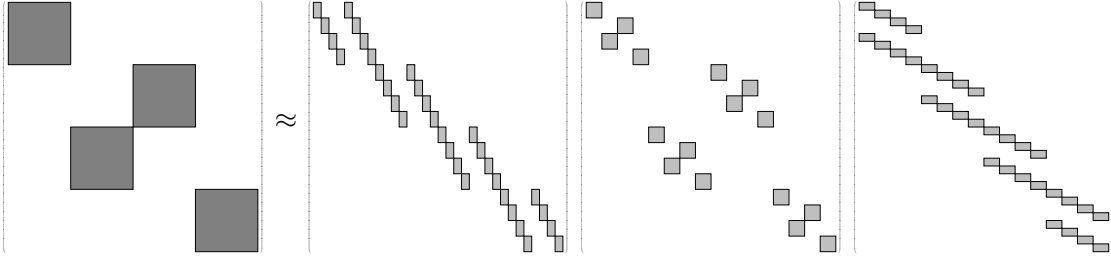

Similarly, we can apply MSCS recursively to each $S^{\ell}$ and assemble matrix factors hierarchically for $\ell=L$, $L-1$, $\dots$, $L/2$ to obtain
\begin{equation}
\label{eq:finisH-IDBF}
A \approx U^{L}U^{L-1}\cdots U^{h} S^h V^{h}\cdots V^{L-1}V^{L},
\end{equation}
where $h=L/2$. In the entire computation procedure, linear IDs only require $O(1)$ operations for each low-rank submatrix, and hence at most $O(N)$ for each level of factorization, and $O(N\log N)$ for the whole IDBF.

\end{document}